\newtheorem {theorem}{Theorem}[section]
\newtheorem {corollary}{Corollary}[section]
\newtheorem {lemma}{Lemma}[section]
\newtheorem {example}{Example}[section]
\newtheorem {definition}{Definition}[section]
\newtheorem {remark}{Remark}[section]
\newtheorem {proposition}{Proposition}[section]
\def\ar{a\kern-.370em\raise.16ex\hbox{\char95\kern-0.53ex\char'47}\kern.05em}
\def\ees{{\accent"5E e}\kern-.385em\raise.2ex\hbox{\char'23}\kern-.08em}
\def\eex{{\accent"5E e}\kern-.470em\raise.3ex\hbox{\char'176}}
\def\AR{A\kern-.46em\raise.80ex\hbox{\char95\kern-0.53ex\char'47}\kern.13em}
\def\EES{{\accent"5E E}\kern-.5em\raise.8ex\hbox{\char'23 }}
\def\EEX{{\accent"5E E}\kern-.60em\raise.9ex\hbox{\char'176}\kern.1em}
\def\ow{o\kern-.42em\raise.82ex\hbox{
  \vrule width .12em height .0ex depth .075ex \kern-0.16em \char'56}\kern-.07em}
\def\OW{O\kern-.460em\raise1.36ex\hbox{
\vrule width .13em height .0ex depth .075ex \kern-0.16em \char'56}\kern-.07em}
\def\UW{U\kern-.42em\raise1.36ex\hbox{
\vrule width .13em height .0ex depth .075ex \kern-0.16em \char'56}\kern-.07em}
\def\DD{D\kern-.7em\raise0.4ex\hbox{\char '55}\kern.33em}
\title[]{Complementary problems with polynomial data}
\author{TI\EES N-S\OW n PH\d{A}M}
\address{Department of Mathematics, University of Dalat, 1 Phu Dong Thien Vuong, Dalat, Vietnam}
\email{sonpt@dlu.edu.vn}
\author{C\AR NH H\`UNG NGUY\EEX N}
\address{Department of Mathematics, Nha Trang University, 02 Nguyen Dinh Chieu, Nha Trang, Vietnam}
\email{hungnc@ntu.edu.vn}
\dedicatory{Dedicated to Professor Boris Mordukhovich on the occasion of his 70th birthday}
\thanks{The first author is partially supported by Vietnam National Foundation for Science and Technology Development (NAFOSTED)} 
\subjclass[2010]{90C33}
\keywords{Polynomial complementarity problem~$\cdot$~Existence~$\cdot$~Boundedness~$\cdot$~Uniqueness~$\cdot$~Error bound~$\cdot$~Genericity}
\date{ \today}
\begin{document}
\maketitle

\begin{abstract}
Given polynomial maps $f, g \colon \mathbb{R}^n \to \mathbb{R}^n,$ we consider the {\em polynomial complementary problem} of finding a vector $x \in \mathbb{R}^n$ such that
\begin{equation*}
f(x) \ \ge \ 0, \quad g(x) \ \ge \  0, \quad \textrm{ and } \quad \langle f(x), g(x) \rangle \ = \ 0.
\end{equation*}
In this paper, we present various properties on the solution set of the problem, including genericity, nonemptiness, compactness, uniqueness as well as error bounds with exponents explicitly determined. These strengthen and generalize some previously known results, and hence broaden the boundary knowledge of nonlinear complementarity problems as well.
\end{abstract}

\section{Introduction}

We consider the {\em polynomial complementary problem} (PCP) of finding a vector $x \in \mathbb{R}^n$ such that
\begin{equation}\label{PCP}
f(x) \ \ge \ 0, \quad g(x) \ \ge \  0, \quad \textrm{ and } \quad \langle f(x), g(x) \rangle \ = \ 0,
\end{equation}
where $f, g \colon \mathbb{R}^n \to \mathbb{R}^n$ are polynomial maps. We denote this problem by $\mathrm{PCP}(f, g)$ for short.
This is just the nonlinear complementarity problem and reduces to the classical {\em linear complementarity problem} (LCP) when $f$ is the identity map and $g$ is an affine map. With an extensive theory, algorithms, and applications, the linear complementarity problem has been well studied in the optimization literature; for more details, we refer the reader to the comprehensive monographs \cite{Cottle2009, Facchinei2003} with the references therein. Note, too, that PCPs contain {\em tensor complementarity problems} which has received considerable attention in recent years, see e.g., \cite{Bai2016, Fan2018, Li2015, Qi2018, Song2016}.

Let $\mathrm{SOL}(f, g)$ denote the solution set of the $\mathrm{PCP}(f, g)$ and define the {\em natural} map $\frak{m} \colon \mathbb{R}^n \to \mathbb{R}^n$ by
$$\frak{m}(x) := \min\{f(x), g(x)\},$$
where the $\min$ operator denotes the componentwise minimum of two vectors. Then it is clear that $\mathrm{SOL}(f, g)$ is precisely the zero set of the natural map $\frak{m}.$ In other words, we have
$$\mathrm{SOL}(f, g) = \{x \in \mathbb{R}^n \ : \ \frak{m}(x) = 0\}.$$

We first assume that $f$ is the {\em identity} map ${Id}.$ The literature on PCPs (in particular, LCPs) in this case is vast and we confine ourselves to quoting a few that are relevant to our study. Some generic properties of complementarity problems are known; for example, 
Saigal and Simon \cite{Saigal1973} have shown that for almost all maps $g,$ the corresponding complementarity problem has a discrete solution set.

The existence, boundedness, and uniqueness of solutions of LCPs are well-studied topics; see \cite{Cottle2009, Facchinei2003}. 
Very recently, Gowda \cite{Gowda2017} (see also \cite{Gowda2018, Hu2018-1}) establishes results connecting the polynomial complementarity problem $\mathrm{PCP}(Id, g)$ and the tensor complementarity problem $\mathrm{PCP}(Id, g^\infty),$ where $g^\infty$ is the homogeneous part of highest degree of $g.$ In particular, he shows some properties on the solution set of PCPs, including nonemptiness, boundedness and uniqueness. 

An important topic in the study of complementarity problems concerns {\em error bounds} for estimating the distance from an arbitrary point $x \in \mathbb{R}^n$ to the solution set $\mathrm{SOL}(Id, g)$ in terms of the natural map. When $g$ is an {\em affine} map, it is well-known that a local Lipschitzian error bound holds due to Robinson \cite{Robinson1981} (see also Theorem~\ref{Theorem12} in Section~\ref{Section5} for a different proof) and, under some assumptions, a global Lipschitzian error bound holds; for more details, we refer the reader to Chapter~6 in the monograph \cite{Facchinei2003} by Facchinei and Pang with the references therein. Very recently, assume that $g$ is a {\em quadratic} map satisfying a certain additional condition, Hu, Wang and Huang \cite{Hu2018-2} derive some local H\"olderian error bound results with explicit exponents.

However, there has, to the best of our knowledge, been no attempt to extend the results mentioned above to the case where $f$ is {\em not} the identity map. In this paper, we undertake this study for the $\mathrm{PCP}(f, g)$ with $f$ and $g$ being {\em arbitrary} polynomial maps. Our main contributions are as follows: 
\begin{enumerate}
\item[(i)] We show that the solution set of PCPs is finite generically. 
Here and in the following, we say that a given property holds generically, if it holds in an open and dense (semialgebraic) set of the entire space of input data.

\item[(ii)] We provide a necessary and sufficient condition for the compactness of the solution set of the $\mathrm{PCP}(f, g)$ in terms of the natural map $\frak{m}.$

\item[(iii)] We show that under appropriate conditions, the $\mathrm{PCP}(f + p, g + q)$ has a nomempty compact solution set for all polynomial maps $p$ and $q$ of degrees less than those of $f$ and $g,$ respectively.

\item[(iv)] We establish some (local and global) H\"olderian error bound results for the solution set of the $\mathrm{PCP}(f, g)$ in terms of the natural map $\frak{m}$ with exponents explicitly determined by the dimension $n$ of the underlying space $\mathbb{R}^n$ and the degree of the involved polynomial maps $f$ and $g.$
Furthermore, it is shown that, generically, PCPs have a global Lipschitzian error bound.

\end{enumerate}

Consequently, the results presented in this paper strengthen and generalize some previously known results, and hence broaden the boundary knowledge of nonlinear complementarity problems as well.

The rest of the paper is organized as follows. Section~\ref{Section2} covers some preliminary materials. In Section~\ref{Section3}, genericity properties for PCPs are addressed. In Section~\ref{Section4}, various properties on solution sets for PCPs, including the nonemptiness, compactness and uniqueness are presented. Finally, in Section~\ref{Section5}, error bound results via the natural map with explicit exponents is established.

\section{Preliminaries} \label{Section2}

\subsection{Notations}
We shall use the following notations throughout the paper. Fix a number  $n \in {\Bbb N}$, $n \geqq 1$, and abbreviate $(x_1, \ldots, x_n)$ by $x.$  The space $\mathbb{R}^n$ is equipped with the usual scalar product $\langle \cdot, \cdot \rangle$ and the corresponding Euclidean norm $\| \cdot\|.$ As usual, $\mathrm{dist}(x, S)$ denotes the Euclidean distance from $x \in \mathbb{R}^n$ to $S \subset \mathbb{R}^n,$ i.e.,
\begin{eqnarray*}
\mathrm{dist}(x, S) &:=& \inf\{\|x - y \| \ : \ y \in S \},
\end{eqnarray*}
where, by convention, the infimum is $1$ if $S$ is empty.

For a vector $x := (x_1, \ldots, x_n) \in \mathbb{R}^n$ we write $x \ge 0$ when $x_i \ge 0$ for all $i = 1, \ldots, n.$

For two vectors $x$ and $y$ in $\mathbb{R}^n,$ we write $\min\{x, y\}$ for the vector whose $i$th component is $\min\{x_i, y_i\}.$ Observe that
$$\min\{x, y\} = 0 \quad \Leftrightarrow \quad \quad x \ge 0, y \ge 0, \ \textrm{ and } \ \langle x, y \rangle = 0.$$

A polynomial map $f \colon \mathbb{R}^n \to \mathbb{R}^n$ is {\em homogeneous} of degree $d$ (which is a natural number) if $f(tx) = t^d f(x)$ for all $x \in \mathbb{R}^n$ and all $t \in \mathbb{R}.$

Consider a polynomial map $f \colon \mathbb{R}^n \to \mathbb{R}^n,$ which is expressed, after regrouping terms, in the following
form:
\begin{eqnarray*}
f(x) &=& \mathcal{A}_d(x) + \mathcal{A}_{d - 1}(x) + \cdots + \mathcal{A}_0(x),
\end{eqnarray*}
where each term $\mathcal{A}_k$ is a polynomial map, homogeneous of degree $k.$ We assume that $\mathcal{A}_d$ is nonzero and say that $f$ is a polynomial map of degree $d.$ Let $f^\infty := \mathcal{A}_d$ denote the ``leading term'' of $f.$

\subsection{The notion of degree}


In this paper we use systematically the topological degree as it is presented in \cite[Chapter~6]{Cottle2009}, \cite[Chapter~2]{Facchinei2003} and \cite{Lloyd1978}. Here is a short review of what we need in our development. 

Suppose $\Omega$ is a bounded open set in $\mathbb{R}^n,$ $f \colon \overline{\Omega} \rightarrow \mathbb{R}^n$ is a continuous map, and $y \not \in f(\partial \Omega),$ where $\overline{\Omega}$ and $\partial \Omega$ denote, respectively, the closure and boundary of $\Omega.$ Then an integer called {\em the degree of $f$ at $y$ relative to} $\Omega$ is defined. This number, denoted by  $\deg(f, \Omega, y),$ gives an estimation and the nature of the solution(s) of the equation $f(x) = y$ in $\Omega.$ When this degree is nonzero, the equation $f(x) = y$ has a solution in $\Omega.$ Suppose $f(x) = y$ has a unique solution, say, $x^*$ in $\Omega.$ Then, $\deg (f, \Omega', y)$ is constant over all bounded open sets $\Omega'$ containing $x^*$ and contained in $\Omega.$
This common degree is called the {\em local (topological) degree} of $f$ at $x^*$ (also called the {\em index} of $f$ at $x^*$ in some literature); 
it will be denoted by $\deg (f, x^*).$ In particular, if $g \colon \mathbb{R}^n \rightarrow \mathbb{R}^n$ is a continuous map such that $g(x) = y \Rightarrow x = x^*,$ then, for any bounded open set $\Omega$ in $\mathbb{R}^n$ containing $x^*,$ we have
\begin{eqnarray*}
\deg (g, x^*) & = & \deg (g, \Omega, y);
\end{eqnarray*}
moreover, 
$\deg (g, x^*) = 1$ (resp.,  $\deg (g, x^*) = \pm 1$) when $g$ is the identity map (resp., a homeomorphism map).

Let $H \colon \mathbb{R}^n \times [0, 1] \rightarrow \mathbb{R}^n, (x, t) \mapsto H(x, t),$ be a continuous map (in which case, we say that $H$ is a {\em homotopy}) and assume that the zero set 
\begin{eqnarray*}
\{x \in \mathbb{R}^n \ : \ H(x, t) = 0 \quad \textrm{ for some } \quad t \in [0, 1]\}
\end{eqnarray*}
is bounded. Then, for any bounded open set $\Omega$ in $\mathbb{R}^n$ that contains this zero set, we have the homotopy invariance property of degree:
\begin{eqnarray*}
\deg (H(\cdot, 1), \Omega, 0) \ = \ \deg (H(\cdot, 0), \Omega, 0).
\end{eqnarray*}

\subsection{Error bounds for polynomial systems}

In this subsection we recall an error bound result with explicit exponents for polynomial systems over compact sets. 

To state the result, let us begin with some notation. Given a real number $a,$ we define $[-a]_+ := \max\{-a, 0\}$ as usual, so $a \ge 0$ if, and only if, $[-a]_+ = 0.$ 
Following D'Acunto and Kurdyka \cite{Acunto2005}, for two positive integer numbers $n$ and $d,$ we let
\begin{eqnarray*}
\mathscr{R}(n, d) :=
\begin{cases}
d(3d - 3)^{n - 1} &\ {\rm if} \ d \geq 2, \\
1 &\  {\rm if} \ d = 1.
\end{cases}
\end{eqnarray*}
The following result will be used in Section~\ref{Section5}.

\begin{lemma}\label{Lemma21}
Let $g_i$ as $i=1,\ldots, l$ and $h_j$ as $j=1,\ldots, m,$ be real polynomials on $\mathbb{R}^n$ of degrees at most $d$, and let 
\begin{equation*}
S := \{ x \in \mathbb{R}^n \ | \ g_1(x) = 0, \ldots, g_l(x) = 0, \ h_1(x) \le 0, \ldots, h_m(x) \le 0\} \ne \emptyset.
\end{equation*}
Then for any compact set $K \subset \mathbb{R}^n,$ there is a constant $c > 0$ such that
\begin{eqnarray*}
c\, \mathrm{dist}(x, S)^{\mathscr{R}(n + l + m - 1, d + 1)} &\le& \sum_{i=1}^l|g_i(x)| + \sum_{j=1}^m [h_j(x)]_+  \quad \textrm{ for all } \quad x \in K.
\end{eqnarray*}
\end{lemma}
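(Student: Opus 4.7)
The plan is to deduce this \L{}ojasiewicz-type error bound from the effective \L{}ojasiewicz inequality of D'Acunto--Kurdyka by reducing the multi-polynomial system defining $S$ to a single scalar polynomial on a higher-dimensional compact semi-algebraic set.

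Introduce dual variables $\lambda = (\lambda_1, \ldots, \lambda_l) \in \mathbb{R}^l$ and $\mu = (\mu_1, \ldots, \mu_m) \in \mathbb{R}^m$ constrained to
\[
\Delta \;:=\; \bigl\{(\lambda, \mu) \in \mathbb{R}^{l+m} : \|\lambda\|^2 + \|\mu\|^2 = 1,\ \mu_j \ge 0 \text{ for all } j\bigr\},
\]
a compact semi-algebraic set of dimension $l+m-1$. Set
\[
F(x, \lambda, \mu) \;:=\; \sum_{i=1}^l \lambda_i\, g_i(x) + \sum_{j=1}^m \mu_j\, h_j(x),
\]
a polynomial of total degree $d+1$ on $\mathbb{R}^{n+l+m}$. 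An elementary computation, maximizing the linear functional $F(x, \cdot)$ over $\Delta$, yields $\max_{(\lambda,\mu)\in\Delta} F(x,\lambda,\mu) = \bigl(\sum_i g_i(x)^2 + \sum_j [h_j(x)]_+^2\bigr)^{1/2}$, which is comparable (up to the factor $\sqrt{l+m}$) to $\phi(x) := \sum_i |g_i(x)| + \sum_j [h_j(x)]_+$ and vanishes iff $x \in S$.

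Next, invoke the effective \L{}ojasiewicz inequality in the form valid for a polynomial of degree $d+1$ restricted to a compact semi-algebraic set of dimension $n+l+m-1$, namely $F$ on $K \times \Delta$: there exists a constant $c_1 > 0$ with
\[
|F(x,\lambda,\mu)| \;\ge\; c_1\, \mathrm{dist}\bigl((x,\lambda,\mu),\, \{F = 0\} \cap (K \times \Delta)\bigr)^{\mathscr{R}(n+l+m-1,\, d+1)}
\]
for all $(x,\lambda,\mu) \in K \times \Delta$. For each $x \in K \setminus S$, pick a maximizer $(\lambda^\ast, \mu^\ast) \in \Delta$ of $F(x,\cdot)$; using the explicit structure of the maximizer ($\mu^\ast_j$ concentrated on indices with $h_j(x) \ge 0$, and $\lambda^\ast_i$ carrying the sign of $g_i(x)$) together with a continuity-compactness argument on $K$, one shows that the lifted distance is bounded below by a constant multiple of $\mathrm{dist}(x,S)$ uniformly. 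Combined with the comparability $F(x,\lambda^\ast, \mu^\ast) \asymp \phi(x)$, this yields the desired inequality.

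The main technical obstacle is precisely this descent step: the zero set $\{F = 0\}$ in the lifted space is generally larger than $S \times \Delta$ (any $x_0 \in K$ admitting a linear dependency $\sum_i \lambda_i g_i(x_0) + \sum_j \mu_j h_j(x_0) = 0$ with $(\lambda,\mu) \in \Delta$ contributes), so the naive lifted distance can be strictly smaller than $\mathrm{dist}(x,S)$; one must exploit the specific face/active-set structure of the maximizing section to sidestep these spurious strata, with the trivially positive regime $\mathrm{dist}(x, S)$ bounded away from $0$ handled by compactness. A secondary point is invoking D'Acunto--Kurdyka for a polynomial on a compact semi-algebraic set of prescribed dimension rather than on full $\mathbb{R}^N$, which is a routine but careful extension that must be matched against the dimension count $n+l+m-1$ and degree $d+1$ appearing in the stated exponent.
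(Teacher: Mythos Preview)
The paper does not actually prove this lemma: its entire proof is the sentence ``See \cite[Theorem~3.3]{HaHV2017} or \cite[Theorem~3.5]{Li2015}.'' So there is nothing in the paper to compare your argument against beyond noting that the authors treat the result as an external input.

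As an independent proof attempt, your lifting has the right dimension and degree bookkeeping, but both of the steps you flag remain genuinely open, and neither is a formality. For the descent step, the difficulty is quantitative, not topological: the zero locus $\{F=0\}\cap(K\times\Delta)$ typically fibres over \emph{all} of $K$ (any $x_0$ with $g(x_0)\ne 0$, or with the $h_j(x_0)$ of mixed signs, supports some $(\lambda,\mu)\in\Delta$ annihilating $F$), so the lifted distance from $(x,\lambda^\ast(x),\mu^\ast(x))$ is controlled by how far $(\lambda^\ast(x),\mu^\ast(x))$ sits from the spurious fibre over $x$ itself, not by $\mathrm{dist}(x,S)$. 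Your ``continuity--compactness'' remark covers only the regime where $\mathrm{dist}(x,S)$ is bounded below; near $S$ the maximizer $(\lambda^\ast(x),\mu^\ast(x))$, being the normalized vector $\bigl(g(x),[h(x)]_+\bigr)\big/\bigl(\sum_i g_i(x)^2+\sum_j[h_j(x)]_+^2\bigr)^{1/2}$, need not even converge, and nothing in your outline prevents the lifted distance from vanishing to higher order than $\mathrm{dist}(x,S)$ along some approach to $S$. Proving the required lower bound on the lifted distance is essentially as hard as the original error bound.

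The second issue is likewise not routine. D'Acunto--Kurdyka is stated for a polynomial on $\mathbb{R}^N$; obtaining the exponent $\mathscr{R}(n+l+m-1,d+1)$ rather than $\mathscr{R}(n+l+m,d+1)$ by restricting to the $(n+l+m-1)$-dimensional set $K\times\Delta$ would require either polynomial local charts (which the sphere does not admit) or a genuine reworking of their gradient-trajectory argument under the constraints $\|(\lambda,\mu)\|^2=1$, $\mu\ge 0$. The proofs in the cited references avoid both obstacles through a different reduction; as it stands, your outline is a plausible heuristic but not yet a proof.
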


\begin{proof}
See \cite[Theorem~3.3]{HaHV2017} or \cite[Theorem~3.5]{Li2015}.
\end{proof}

\section{Generic properties}\label{Section3}

In this section we show that for a generic set of polynomial maps $(f, g),$ the corresponding complementarity problem $\mathrm{PCP}(f, g)$ has a finite solution set. 
To this end,  we fix some notation. Given positive integers $d_1, \ldots, d_n,$ let $\mathbf{P}_{(d_1, \ldots, d_n)}$ denote the set of polynomial maps $f := (f_1, \ldots, f_n)$ from $\mathbb{R}^n$ to itself with $\deg f_i \le d_i$ for $i = 1, \ldots, n.$  If $\kappa = (\kappa_1, \ldots, \kappa_n) \in \mathbb{N},$ we denote by $x^\kappa$ the monomial  $x_1^{\kappa_1} \cdots x_n^{\kappa_n}$ and by $| \kappa|$ the sum $\kappa_1 + \cdots + \kappa_n.$  For each $i \in \{ 1, \ldots, n\},$ by using the lexicographic ordering on the set of monomials $x^\kappa, |\kappa| \le d_i,$ we  may identify each polynomial function $f_i(x) := \sum_{|\kappa| \le d_i} u_{i, \kappa} x^\kappa,$ with its vector of coefficients, i.e.,  
$f_i  \equiv (u_{i, \kappa})_{|\kappa| \le d_i}  \in\mathbb{R}^{s_i},$ where $s_i := \# \{ \kappa \in \mathbb{N}^n \ : \ |\kappa| \le d_i\}.$ Then $\mathbf{P}_{(d_1, \ldots, d_n)}$ is identified with the Euclidean space $\mathbb{R}^{s_1} \times \cdots \times \mathbb{R}^{s_n}.$

The following result is inspired by the work of Saigal and Simon \cite{Saigal1973}.

\begin{proposition} \label{Proposition31}
Let $d_i, d_i', i = 1, \ldots, n,$ be positive integer numbers. There exists an open dense semi-algebraic set $\mathscr{U}$ in $\mathbf{P}_{(d_1, \ldots, d_n)} \times \mathbf{P}_{(d_1', \ldots, d_n')}$ such that for all $(f, g) \in \mathscr{U},$ the following statements hold:
\begin{enumerate}
\item[{\rm (i)}] The solution set for the corresponding complementarity problem $\mathrm{PCP}(f, g)$ is finite and has at most $(2d)^n$ elements, where $d := \max_{i = 1, \ldots, n} \{d_i, d_i'\}.$ 

\item[{\rm (ii)}] If $x$ is a solution of $\mathrm{PCP}(f, g)$ then $f_i(x) + g_i(x) > 0$ for all $i = 1, \ldots, n.$

\item[{\rm (iii)}] $\mathrm{SOL}(f^\infty, g^\infty)  = \{0\}.$
\end{enumerate}
\end{proposition}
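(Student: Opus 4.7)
The plan is to construct $\mathscr{U}$ as an intersection $\mathscr{U}_1 \cap \mathscr{U}_2 \cap \mathscr{U}_3$, one Zariski-open (hence open dense semi-algebraic) piece of coefficient space for each of (i), (ii), (iii). The underlying combinatorial reduction is the decomposition
$$\mathrm{SOL}(f, g) \ = \ \bigcup_{I \subseteq \{1, \ldots, n\}} S_I(f, g),$$
where $S_I(f, g)$ is the set of $x$ with $f_i(x) = 0$ for $i \in I$, $g_j(x) = 0$ for $j \notin I$, and the remaining $f_j, g_i$ nonnegative. For each index set $I$, define the square polynomial system $F_I := (f_i)_{i \in I} \cup (g_j)_{j \notin I} \colon \mathbb{R}^n \to \mathbb{R}^n$, a map whose $k$th component has degree at most $d$. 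Then $S_I(f, g)$ is contained in the zero set of $F_I$, which is the key object in all three genericity arguments.

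For (i), I invoke the classical fact (via the multipolynomial resultant; see elimination theory in \cite{Lloyd1978} or in the references of \cite{Cottle2009}) that the set of coefficients for which the square system $F_I = 0$ fails to have finitely many complex solutions is a proper Zariski-closed subvariety. On its complement, B\'ezout's theorem bounds the number of complex zeros, and hence of real zeros, by $\prod_{i \in I} d_i \cdot \prod_{j \notin I} d_j' \le d^n$. Taking $\mathscr{U}_1$ to be the intersection over $I$ of these complements and summing over the $2^n$ choices of $I$ yields at most $(2d)^n$ solutions.

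For (ii), I need to forbid simultaneous vanishing $f_i(x) = g_i(x) = 0$ at a solution. For each pair $(I, i)$, augment $F_I$ by the missing one of $\{f_i, g_i\}$, obtaining an overdetermined system of $n+1$ polynomials in $n$ unknowns. Generically such a system has no common complex zero; the locus where it does is again a proper Zariski-closed set (cut out by a suitable resultant, or equivalently by the projection of the affine incidence variety, which has codimension $\ge 1$ in coefficient space). The intersection over all $(I, i)$ of the complements defines $\mathscr{U}_2$, and on $\mathscr{U}_1 \cap \mathscr{U}_2$ every solution $x$ satisfies $f_i(x) + g_i(x) > 0$, since both are nonnegative and not both zero. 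For (iii), for each $I$ consider the homogeneous system $(f_i^\infty)_{i \in I} \cup (g_j^\infty)_{j \notin I}$ of $n$ homogeneous polynomials in $n$ variables; projectively they carve out a subvariety of $\mathbb{P}^{n-1}$ of expected dimension $-1$. The Macaulay resultant, a polynomial in the top-degree coefficients, vanishes exactly when a nontrivial common zero exists. Its nonvanishing locus $\mathscr{U}_3$ then guarantees $\mathrm{SOL}(f^\infty, g^\infty) \subseteq \{0\}$, and $0 \in \mathrm{SOL}(f^\infty, g^\infty)$ is automatic.

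The main technical obstacle is verifying that each exceptional set is a \emph{proper} Zariski-closed subset, so that the corresponding $\mathscr{U}_k$ is actually dense rather than empty. For each of the three steps this amounts to exhibiting one explicit pair $(f, g)$ meeting the desired condition: for (i) any system with only simple roots will do; for (ii) one can choose $f_i, g_i$ with disjoint zero sets (e.g.\ translations); for (iii) the choice $f_i^\infty(x) = x_i^{d_i}$, $g_j^\infty(x) = x_j^{d_j'}$ (or analogous monomial systems if some $d_i < \max d_i$) has nonvanishing Macaulay resultant. Each $\mathscr{U}_k$ is then Zariski-open and nonempty, hence open dense semi-algebraic, and the finite intersection $\mathscr{U} = \mathscr{U}_1 \cap \mathscr{U}_2 \cap \mathscr{U}_3$ inherits these properties, completing the proof.
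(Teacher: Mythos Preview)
Your proposal is correct and follows the same combinatorial reduction as the paper: decompose $\mathrm{SOL}(f,g)$ according to the index set $I$ of coordinates where $f_i$ vanishes, and handle each square system $F_I$ separately. The difference lies in the genericity tool. The paper proves an auxiliary lemma using the parametric Sard theorem: for (i) it shows that $0$ is a regular value of the evaluation map $(f,x)\mapsto f(x)$, so generically $F_I^{-1}(0)$ is a transversal $0$-manifold (then invokes B\'ezout for the count); for (ii) and (iii) it uses the same transversality setup but exploits a dimension drop (target dimension exceeds source dimension, respectively $n+1>n$ and $n>n-1$ on the sphere) to force emptiness. You instead appeal to elimination theory, using multipolynomial and Macaulay resultants to cut out the exceptional loci as Zariski-closed hypersurfaces, with explicit witness pairs to guarantee properness. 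Both routes deliver open dense semi-algebraic complements and the same B\'ezout bound; your algebraic argument is perhaps more self-contained for the Zariski-open conclusion, while the paper's Sard argument avoids having to name specific resultants and works uniformly within the semi-algebraic framework already set up in the monograph \cite{HaHV2017}. One minor point: the reference \cite{Lloyd1978} is a degree-theory text, not an elimination-theory source, so you would want to substitute a proper citation for the resultant facts you invoke.
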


\begin{proof}
Indeed, by definition, we have
\begin{eqnarray*}
\mathrm{SOL}(f, g) &\subset& \bigcup_{I} \{x \in \mathbb{R}^n \ : \ f_i(x) = 0 \textrm{ for } i \in I \textrm{ and }
g_i(x) = 0 \textrm{ for } i \not \in I\},
\end{eqnarray*}
where the union is taken over all subsets $I$ of the set $\{1, \ldots, n\}.$ Clearly, the desired conclusion follows immediately from the next lemma.
\end{proof}

\begin{lemma}
For each index set $I \subset \{1, \ldots, n\},$ there exists an open dense semi-algebraic set $\mathscr{U}_I \subset \mathbf{P}_{(d_1, \ldots, d_n)} \times \mathbf{P}_{(d_1', \ldots, d_n')}$ such that for all $(f, g) \in \mathscr{U}_I,$ the following two assertions hold:
\begin{enumerate}
\item[{\rm (i)}] The set
$$\{x \in \mathbb{R}^n \ : \ f_i(x) = 0 \textrm{ for } i \in I \textrm{ and }
g_i(x) = 0 \textrm{ for } i \not \in I\}$$
is finite and has at most $\prod_{i \in I} d_i \times \prod_{i \not \in I'} d_i'$ elements.

\item[{\rm (ii)}] For all $h \in \{g_i \textrm{ for } i \in I \textrm{ and } f_i \textrm{ for } i \not \in I\},$ the set
$$\{x \in \mathbb{R}^n \ : \ f_i(x) = 0 \textrm{ for } i \in I, \
g_i(x) = 0 \textrm{ for } i \not \in I, \textrm{ and } h(x) = 0\}$$
is empty.

\item[{\rm (iii)}] The system of homogeneous equations
$$f_i^\infty(x) = 0 \textrm{ for } i \in I \textrm{ and } g_i^\infty(x) = 0 \textrm{ for } i \not \in I$$
has a unique solution $x = 0.$
\end{enumerate}

\end{lemma}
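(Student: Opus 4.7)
The plan is to construct $\mathscr{U}_I$ as a finite intersection of nonempty Zariski-open semi-algebraic subsets of the parameter space, one responsible for each of (i), (ii), (iii). Any nonempty Zariski-open subset of a real Euclidean space, being defined by the non-vanishing of finitely many real polynomials, is automatically Euclidean-open, Euclidean-dense, and semi-algebraic, so such an intersection suffices.

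I would begin with (iii), the logical backbone of the argument. Let $\Phi_I$ denote the homogeneous system $\{f_i^\infty : i \in I\} \cup \{g_j^\infty : j \notin I\}$ of $n$ forms in $n$ variables. Classical multivariate resultant theory associates to $\Phi_I$ a polynomial $\mathrm{Res}(\Phi_I)$ in the coefficients of the leading forms which vanishes if and only if $\Phi_I$ has a nontrivial common zero in $\mathbb{C}^n$; equivalently, one may obtain the same open set by Tarski--Seidenberg, projecting the closed semi-algebraic set $\{(\Phi_I, x) : \|x\| = 1,\ \Phi_I(x) = 0\}$ to coefficient space. Take $\mathscr{U}_I^{(\mathrm{iii})}$ to be the open locus where this resultant does not vanish, pulled back to the full parameter space. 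Its nonemptiness is witnessed by $f_i^\infty(x) := x_i^{d_i}$ and $g_j^\infty(x) := x_j^{d_j'}$, whose only common zero is the origin. Condition (i) then follows almost directly: for $(f, g) \in \mathscr{U}_I^{(\mathrm{iii})}$, the projective closure of the complex variety
$$V(f, g) := \{x \in \mathbb{C}^n : f_i(x) = 0 \textrm{ for } i \in I, \ g_j(x) = 0 \textrm{ for } j \notin I\}$$
has no points on the hyperplane at infinity, since along that hyperplane the defining equations reduce to $\Phi_I = 0$. Bezout's theorem therefore yields $\#V(f, g) \le \prod_{i \in I} d_i \cdot \prod_{j \notin I} d_j'$, and the real set in (i), being contained in $V(f, g)$, is finite with at most the same cardinality.

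For (ii), pick any admissible extra equation $h \in \{g_i : i \in I\} \cup \{f_j : j \notin I\}$; crucially, the coefficients of $h$ are disjoint from those appearing in the system defining $V(f,g)$. Forming the resultant of the augmented, now overdetermined, system of $n + 1$ equations in $n$ unknowns yields a polynomial $R_h$ in the joint coefficients whose non-vanishing, together with (iii), is equivalent to emptiness of the common complex zero set. Define $\mathscr{U}_I^{(\mathrm{ii})}$ to be the open set $\bigcap_h \{R_h \neq 0\}$, where the intersection runs over the finitely many admissible $h$. Nonemptiness is immediate by taking each such $h$ to be a nonzero constant, which is permitted because the degrees in the parameter space are only upper bounds. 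Setting $\mathscr{U}_I := \mathscr{U}_I^{(\mathrm{ii})} \cap \mathscr{U}_I^{(\mathrm{iii})}$ then completes the construction.

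The main obstacle I anticipate is making (ii) fully rigorous through this resultant/elimination viewpoint rather than by naively attempting to track the zeros $x_k^* \in V(f,g)$ as functions of the coefficients: the $x_k^*$ depend only algebraically on the coefficients and are permuted by monodromy around the discriminant locus. Packaging the condition as the non-vanishing of a single resultant polynomial on the joint coefficient space, whose legitimacy is guaranteed precisely because (iii) forbids solutions at infinity, bypasses this difficulty and keeps the whole construction inside the polynomial and hence semi-algebraic category.
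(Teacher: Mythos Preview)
Your approach is correct and genuinely different from the paper's. The paper establishes (i), (ii), (iii) separately via the parametric Sard theorem: for (i) it applies Sard to the evaluation map $(f,x)\mapsto f(x)$ to obtain that $0$ is generically a regular value (hence the zero set is discrete by the inverse function theorem) and then invokes B\'ezout for the cardinality bound; for (ii) it applies Sard to $(f,g,x)\mapsto(f(x),g_i(x))$ and uses a dimension count ($n<n+1$) to exclude the surjective-derivative alternative, forcing the fibre over $0$ to be empty; for (iii) it applies Sard to the evaluation map restricted to $\mathbb{S}^{n-1}$ and again argues by dimension. You instead route everything through classical elimination theory: the non-vanishing of the resultant of the leading forms handles (iii), and then (i) follows for free via projective B\'ezout since that same condition forbids points at infinity; (ii) is handled by the Macaulay resultant of the homogenized $(n{+}1)$-system in $n{+}1$ variables, with (iii) guaranteeing that any common projective zero would have to be affine. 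Your route is more algebraic and more economical in that (i) is a corollary of (iii); the paper's Sard-based route avoids any appeal to resultant machinery and stays entirely within the real differential/semi-algebraic framework already set up in its preliminaries. One small caveat: the Tarski--Seidenberg projection you offer as ``equivalent'' to the resultant condition detects only \emph{real} common zeros on the sphere, whereas your argument for (i) requires the absence of \emph{complex} zeros at infinity; the two open sets are not literally the same, so you should commit to the resultant formulation (which is the stronger, and the one your B\'ezout step actually needs).
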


\begin{proof}
Without loss of generality we may assume that $I = \{1, \ldots, n\}.$

(i) Consider the polynomial map
$$ \Phi \colon  \mathbf{P}_{(d_1, \ldots, d_n)} \times \mathbb{R}^n \rightarrow  \mathbb{R}^n, \quad  (f, x) \mapsto  f(x).$$
For each polynomial map $f := (f_1, \ldots, f_n) \in \mathbf{P}_{(d_1, \ldots, d_n)},$ let us write $f_i(x) = \sum_{|\kappa| \le d_i} u_{i, \kappa} x^\kappa$ for $i = 1, \ldots, n.$
Since each polynomial $f_i$ is identified with its vector of coefficients $(u_{i, \kappa})_{|\kappa| \le d_i},$ a simple calculation shows that
$$\left( \frac{\partial\Phi_{j}}{\partial u_{i, \kappa}}\right) _{|\kappa| = 0, \, i, j = 1, 2, \ldots, n}$$
is the unit matrix of order $n,$ and so the Jacobian of $\Phi$ has rank $n$ at every point in $\mathbf{P}_{(d_1, \ldots, d_n)} \times \mathbb{R}^n.$  In particular, $0 \in \mathbb{R}^n$ is a regular value of $\Phi.$ By the Sard theorem with parameter (see, for example, \cite[Theorem~1.10]{HaHV2017}), there exists an open dense semi-algebraic set $\mathscr{V} \subset \mathbf{P}_{(d_1, \ldots, d_n)}$ such that for all $f \in \mathscr{V},$ either the set $\Phi_f^{-1}(0)$ is empty or for any $x \in \Phi_f^{-1}(0),$ the derivative map
$$D_x \Phi_f \colon  \mathbb{R}^n \rightarrow  \mathbb{R}^n$$
is surjective, where $\Phi_f$ denotes the map
$$\Phi_f \colon  \mathbb{R}^n \rightarrow  \mathbb{R}^n, \quad x \mapsto f(x).$$
By the inverse function theorem, then the zero set $\Phi_f^{-1}(0),$ which is the set $f^{-1}(0) ,$ is discrete, and so is finite (possibly empty). Furthermore, since the Jacobian $\left(\frac{\partial f_i}{\partial x_j}(x) \right) _{i, j = 1, 2, \ldots, n}$ has rank $n$ at every point $x$ in $f^{-1}(0),$ it follows from Bezout's theorem (cf., for instance, \cite[Appendix~B]{Benedetti1990} or \cite[Chapter~9]{Bochnak1998}) that the set $f^{-1}(0)$ has at most $\prod_{i = 1}^n d_i$ elements. 

(ii) Fix $i \in I = \{1, \ldots, n\}.$ We will show that there exists an open dense semi-algebraic set $\mathscr{U}^{(i)} \subset \mathbf{P}_{(d_1, \ldots, d_n)} \times \mathbf{P}_{(d_1', \ldots, d_n')}$ such that for each $(f, g) \in \mathscr{U}^{(i)},$ the polynomial equations
$$f_1(x) = 0, \ldots, f_n(x) = 0, g_i(x) = 0$$
have no common solutions. To this end, consider the polynomial map
$$ \Psi \colon  \mathbf{P}_{(d_1, \ldots, d_n)} \times \mathbf{P}_{(d_1', \ldots, d_n')}
\times \mathbb{R}^n \rightarrow  \mathbb{R}^n \times \mathbb{R}, \quad  (f, g, x) \mapsto  (f(x), g_i(x)).$$
Following the same procedure as in (i), we obtain an open dense semi-algebraic subset $\mathscr{U}^{(i)}$ of $\mathbf{P}_{(d_1, \ldots, d_n)} \times \mathbf{P}_{(d_1', \ldots, d_n')}$ such that for all $(f, g) \in \mathscr{U}^{(i)},$ either
$\Psi_{(f, g)}^{-1}(0) = \emptyset$ or for any $x \in \Psi_{(f, g)}^{-1}(0),$ the derivative map
$$D_x \Psi_{(f, g)} \colon \mathbb{R}^n \rightarrow  \mathbb{R}^n \times \mathbb{R}$$
is surjective, where $\Psi_{(f, g)} $ stands for the map
$$\mathbb{R}^n \rightarrow  \mathbb{R}^n \times \mathbb{R}, \quad x \mapsto \Psi(f, g, x).$$
But the latter case is impossible since $\dim \mathbb{R}^{n} = n < n + 1 = \dim (\mathbb{R}^{n} \times \mathbb{R}).$ So, $\Psi_{(f, g)}^{-1}(0) = \emptyset;$ i.e., the solution set of
$$f_1(x) = 0, \ldots, f_n(x) = 0, g_i(x) = 0$$
is empty.

(iii) Let $\mathbf{H}_{(d_1, \ldots, d_n)}$ denote the set of polynomial maps $f := (f_1, \ldots, f_n) \in \mathbf{P}_{(d_1, \ldots, d_n)}$ such that each component $f_i$ is homogeneous of degree $d_i.$ For each $i \in \{ 1, \ldots, n\},$ by using the lexicographic ordering on the set of monomials $x^\kappa, |\kappa|  = d_i,$ we  may identify each homogeneous polynomial $f_i(x) := \sum_{|\kappa|  = d_i} u_{i, \kappa} x^\kappa$ with its vector of coefficients, i.e.,  
$f_i  \equiv (u_{i, \kappa})_{|\kappa|  = d_i}  \in\mathbb{R}^{t_i},$ where $t_i := \# \{ \kappa \in \mathbb{N}^n \ : \ |\kappa|  = d_i\}.$ Then $\mathbf{H}_{(d_1, \ldots, d_n)}$ is identified with the Euclidean space $\mathbb{R}^{t_1} \times \cdots \times \mathbb{R}^{t_n}.$

Let $\mathbb{S}^{n - 1}$ denote the unit sphere in $\mathbb{R}^{n}$ and consider the polynomial map
$$\Gamma \colon  \mathbf{H}_{(d_1, \ldots, d_n)} \times \mathbb{S}^{n - 1} \rightarrow  \mathbb{R}^n, \quad  (f, x) \mapsto  f(x).$$
Take any $f := (f_1, \ldots, f_n) \in \mathbf{H}_{(d_1, \ldots, d_n)}$ and assume $f_i \equiv (u_{i, \kappa})_{|\kappa| = d_i}$ for $i = 1, \ldots, n.$ Take any $x \in \mathbb{S}^{n - 1}.$
Without loss of generality, we may assume that $x_1 \ne 0.$ A simple calculation shows that
\begin{eqnarray*}
\left( \frac{\partial\Gamma_{j}}{\partial u_{i, \kappa^i}}\right) _{\kappa^i = (d_i, 0, \ldots, 0), \, i, j = 1, 2, \ldots, n} &=&
\begin{pmatrix}
x_1^{d_1} & 0 & \cdots & 0 \\
0 & x_1^{d_2} & \cdots & 0 \\
\vdots & \vdots & \ddots & \vdots \\
0 & 0 & \cdots & x_1^{d_n}
\end{pmatrix}.
\end{eqnarray*}
Consequently, the Jacobian of $\Gamma$ has rank $n$ at $(f, x) \in \mathbf{H}_{(d_1, \ldots, d_n)} \times \mathbb{S}^{n - 1}.$
In particular, $0 \in \mathbb{R}^n$ is a regular value of $\Gamma.$ By the Sard theorem with parameter (see, for example, \cite[Theorem~1.10]{HaHV2017}), there exists an open dense semi-algebraic set $\mathscr{W} \subset \mathbf{H}_{(d_1, \ldots, d_n)}$ such that for all $f \in \mathscr{W},$ either the set $\Gamma_f^{-1}(0)$ is empty or for any $x \in \Gamma_f^{-1}(0),$ the derivative map
$$D_x \Gamma_f \colon T_x \mathbb{S}^{n - 1} \rightarrow  \mathbb{R}^n$$
is surjective, where $T_x \mathbb{S}^{n - 1}$ is the tangent space of the sphere $\mathbb{S}^{n - 1}$ at $x$ and $\Gamma_f$ denotes the map
$$\Gamma_f \colon \mathbb{S}^{n - 1} \rightarrow  \mathbb{R}^n, \quad x \mapsto f(x).$$
But the latter case is impossible since $\dim \mathbb{S}^{n - 1} = n - 1 < n = \dim \mathbb{R}^{n}.$ So, $\Gamma_{f}^{-1}(0) = \emptyset$ and hence (by homogeneity of the polynomial functions $f_i$)
\begin{eqnarray*}
\{x \in \mathbb{R}^n \ : \ f_1(x) = 0, \ldots, f_n(x) = 0\} &=& \{0\}.
\end{eqnarray*}

Now we can see that the semi-algebraic set
\begin{eqnarray*}
\mathscr{U}^{(n + 1)} &:=& \{(f, g) \in \mathbf{P}_{(d_1, \ldots, d_n)} \times \mathbf{P}_{(d_1', \ldots, d_n')} \ : \ f^\infty \in \mathscr{W}\}.
\end{eqnarray*}
is open dense in $\mathbf{P}_{(d_1, \ldots, d_n)} \times \mathbf{P}_{(d_1', \ldots, d_n')},$ and furthermore, for every $(f, g) \in \mathscr{U}^{(n + 1)}$ the system of homogeneous equations
$$f_1^\infty(x) = 0, \ldots, f_n^\infty(x) = 0,$$
has a unique solution $x = 0.$

\medskip
Finally, it is easy to check that the set 
$$\mathscr{U}_I := (\mathscr{V} \times \mathbf{P}_{(d_1', \ldots, d_n')}) \cap \mathscr{U}^{(1)} \cap \cdots \cap \mathscr{U}^{(n)} \cap \mathscr{U}^{(n + 1)}$$ 
has the desired properties.
\end{proof}

\section{Nonemptiness, compactness, and uniqueness} \label{Section4}

In this section, various properties on solution sets for polynomial complementarity problems, including nonemptiness, compactness, and uniqueness are established. Some results presented below extend those of Gowda~\cite{Gowda2017}
and Karamardian~\cite{Karamardian1972, Karamardian1976}.

Given two polynomial maps $f, g$ from $\mathbb{R}^n$ to itself, recall that the {\em solution set} of the $\mathrm{PCP}(f, g)$ is denoted by $\mathrm{SOL}(f, g),$ i.e., 
\begin{eqnarray*}
\mathrm{SOL}(f, g) &:=& \{x \in \mathbb{R}^n \ : \ f(x) \ge 0, \ g(x) \ge 0, \ \langle f(x), g(x) \rangle = 0\}.
\end{eqnarray*}
By definition, $\mathrm{SOL}(f, g)$ is a closed and semialgebraic set, and so it has finitely many connected components (see, for example, \cite{Bochnak1998}). Recall also that the {\em natural} map $\frak{m} \colon \mathbb{R}^n \to \mathbb{R}^n$ is given by
\begin{eqnarray*}
\frak{m}(x) &:=& \min\{f(x), g(x)\}.
\end{eqnarray*}
It is easy to see that the map $\mathfrak{m}$ is locally Lipschitz and semialgebraic, and satisfies the relation
\begin{eqnarray*}
\mathrm{SOL}(f, g) &=& \{x \in \mathbb{R}^n \ : \ \frak{m}(x) = 0\}.
\end{eqnarray*}

\begin{proposition}\label{MD1}
Let $f, g \colon \mathbb{R}^n \to \mathbb{R}^n$ be polynomial maps. The following conditions are equivalent:
\begin{itemize}
\item[(i)] The set $\mathrm{SOL}(f, g)$ is compact (possibly empty).

\item[(ii)] There exist constants $c > 0, R > 0$, and $\alpha \in \mathbb{Q}$ such that
\begin{eqnarray*}
\|\frak{m}(x) \| &\ge& c \|x\|^\alpha \quad \textrm{ for all } \quad \|x\| \ge R.
\end{eqnarray*}
\end{itemize}
\end{proposition}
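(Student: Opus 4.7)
The plan is to handle the two implications in turn. The direction $(ii) \Rightarrow (i)$ is essentially immediate: if $\|\mathfrak{m}(x)\| \geq c\|x\|^\alpha > 0$ for every $\|x\| \geq R$, then no point outside the closed ball of radius $R$ can be a zero of $\mathfrak{m}$, so $\mathrm{SOL}(f,g) = \mathfrak{m}^{-1}(0) \subset \overline{B(0,R)}$. Since $\mathfrak{m}$ is continuous, its zero set is closed, hence bounded and closed, hence compact (the empty case being admissible by convention).

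The substance is in $(i) \Rightarrow (ii)$, and my plan is to reduce the multidimensional growth estimate to a one-variable semialgebraic fact at infinity. Assuming $\mathrm{SOL}(f,g)$ is compact, I would first pick $R_0 > 0$ with $\mathrm{SOL}(f,g) \subset B(0,R_0)$, so that $\mathfrak{m}(x) \neq 0$ whenever $\|x\| \geq R_0$, and then define
$$\phi(r) \ := \ \min_{\|x\|=r} \|\mathfrak{m}(x)\|, \qquad r \geq R_0.$$
The minimum is attained because the sphere of radius $r$ is compact and $\mathfrak{m}$ is continuous, and $\phi(r) > 0$ on $[R_0, \infty)$ by the choice of $R_0$. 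Since $\mathfrak{m}$ is semialgebraic (as a componentwise minimum of polynomial maps), its norm is semialgebraic, and then so is $\phi$ by the Tarski--Seidenberg principle applied to the defining formula for $\phi$.

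I would then invoke the growth dichotomy, i.e., the Puiseux expansion at infinity, for positive semialgebraic functions of one real variable: there exist constants $c_1 > 0$ and $\alpha \in \mathbb{Q}$ such that $\phi(r)/r^{\alpha} \to c_1$ as $r \to \infty$. Consequently $\phi(r) \geq (c_1/2) r^{\alpha}$ for all $r$ greater than some $R \geq R_0$, and since $\|\mathfrak{m}(x)\| \geq \phi(\|x\|)$ for every $x$, the estimate in $(ii)$ follows with $c := c_1/2$.

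The main obstacle I anticipate is purely expository, namely invoking the Puiseux expansion at infinity cleanly; this is a standard consequence of Tarski--Seidenberg in the semialgebraic setting used throughout the paper (see, e.g., \cite{Bochnak1998}), and it is precisely this result that forces the exponent $\alpha$ to be \emph{rational}, as required. A minor point worth flagging is that $\alpha$ may well be negative, which the statement explicitly permits: the conclusion asserts only that $\|\mathfrak{m}(x)\|$ does not decay faster than some fixed power of $\|x\|$, not that it grows.
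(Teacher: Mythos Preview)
Your proposal is correct and follows essentially the same route as the paper: both reduce to the one-variable semialgebraic function $\phi(r)=\min_{\|x\|=r}\|\mathfrak{m}(x)\|$, invoke Tarski--Seidenberg to see it is semialgebraic, and then apply the Puiseux-type growth dichotomy at infinity to extract the rational exponent $\alpha$. The only cosmetic difference is that the paper separates out the constant case via the monotonicity theorem before applying the growth dichotomy, whereas you absorb that case directly into the Puiseux expansion.
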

\begin{proof}
Clearly, it suffices to show the implication (i) $\Rightarrow$ (ii). To this end, define the function $\varphi \colon [0, +\infty) \to \mathbb{R}$ by
\begin{eqnarray*}
\varphi(t) &:=& \min_{\|x\| = t} \|\frak{m}(x) \|.
\end{eqnarray*}
Then $\varphi$ is non-negative and $\varphi(t) > 0$ for all $t > 0$ sufficiently large. Furthermore, by the Tarski--Seidenberg theorem \cite[Theorem~1.5]{HaHV2017}, the function $\varphi$ is semi-algebraic. 
So, thanks to the monotonicity theorem \cite[Theorem~1.8]{HaHV2017}, we can find a constant $R > 0$ such that $\varphi$ is either constant or strictly increasing or strictly  decreasing on $[R, +\infty).$ 

If the function $\varphi$ is constant on $[R, +\infty),$ say $c,$ then we have for all $x \in \mathbb{R}^n$ with $\|x\| \ge R,$
\begin{eqnarray*}
\|\frak{m}(x) \| &\ge& \min_{\|u\| = \|x\|} \|\frak{m}(u) \|  \ = \ \varphi(\|x\|) \ = \ c,
\end{eqnarray*}
establishing (ii) with the exponent $\alpha = 0.$

Assume that the function $\varphi$ is not constant on $[R, +\infty).$ In view of the growth dichotomy lemma \cite[Lemma~1.7]{HaHV2017}, we can write 
\begin{eqnarray*}
\varphi(t) &=& a\, t^\alpha + \textrm{ terms of lower degree}
\end{eqnarray*}
for some $a > 0$ and $\alpha \in \mathbb{Q}.$ Consequently, there exists a constant $c > 0$ such that
\begin{eqnarray*}
\varphi(t) &\ge& c\, t^\alpha \quad \textrm{ for all } \quad t \ge R
\end{eqnarray*} 
(after perhaps increasing $R$). Now we have for all $x \in \mathbb{R}^n$ with $\|x\| \ge R,$
\begin{eqnarray*}
\|\frak{m}(x) \| &\ge& \min_{\|u\| = \|x\|} \|\frak{m}(u) \|  \ = \ \varphi(\|x\|) \ \ge \ c \|x\|^\alpha,
\end{eqnarray*}
which completes the proof of (i) $\Rightarrow$ (ii). 
\end{proof}

\begin{proposition}\label{MD3}
Let $f, g \colon \mathbb{R}^n \to \mathbb{R}^n$ be polynomial maps. The following conditions are equivalent:
\begin{itemize}
\item[(i)] The set $\mathrm{SOL}(f, g)$ is unbounded.

\item[(ii)] The set
\begin{eqnarray*}
\{x \in \mathbb{R}^n \ : \ f(x) \ge 0, \ g(x) \ge 0, \ \langle f(x), g(x) \rangle = 0, \ \min_{I} |\det \mathrm{Jac}_I(x)| = 0 \}
\end{eqnarray*}
is unbounded, where for each index set $I \subset \{1, \ldots, n\},$ $\mathrm{Jac}_I$ stands for the Jacobian of the map 
$$\mathbb{R}^n \to \mathbb{R}^n, \quad x \mapsto (f_i(x), g_j(x))_{i \in I, j \not \in I}.$$
\end{itemize}
\end{proposition}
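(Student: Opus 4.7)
My plan is to dispose of the implication (ii) $\Rightarrow$ (i) immediately, since the set described in (ii) is manifestly contained in $\mathrm{SOL}(f,g)$, and then to concentrate on the forward direction. For (i) $\Rightarrow$ (ii), the first step is to partition the solution set by active indices: for each $I \subset \{1,\ldots,n\}$ put
$$\mathrm{SOL}_I \ := \ \{x \in \mathrm{SOL}(f,g) \ : \ f_i(x) = 0 \textrm{ for } i \in I, \ g_j(x) = 0 \textrm{ for } j \not\in I\}.$$
The componentwise nonnegativity $f(x), g(x) \ge 0$ combined with $\langle f(x), g(x) \rangle = 0$ forces $\mathrm{SOL}(f,g) = \bigcup_{I \subset \{1,\ldots,n\}} \mathrm{SOL}_I$, and since this is a finite union some piece $\mathrm{SOL}_{I_0}$ must be unbounded. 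Let $F_{I_0} \colon \mathbb{R}^n \to \mathbb{R}^n$ denote the polynomial map $x \mapsto (f_i(x), g_j(x))_{i \in I_0,\, j \not\in I_0}$, whose Jacobian is precisely $\mathrm{Jac}_{I_0}$; note that $\mathrm{SOL}_{I_0} \subseteq F_{I_0}^{-1}(0)$.

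The second step is to extract a semi-algebraic arc escaping to infinity inside $\mathrm{SOL}_{I_0}$. Since this set is semi-algebraic and unbounded, the curve selection lemma at infinity (a standard consequence of the Tarski--Seidenberg theorem, e.g.\ in \cite{HaHV2017}) produces a non-constant real-analytic semi-algebraic curve $\gamma \colon (0, \varepsilon) \to \mathrm{SOL}_{I_0}$ with $\|\gamma(t)\| \to \infty$ as $t \to 0^+$. Differentiating the identity $F_{I_0}(\gamma(t)) \equiv 0$ yields $\mathrm{Jac}_{I_0}(\gamma(t)) \cdot \gamma'(t) \equiv 0$. Because $\gamma$ is analytic and non-constant, $\gamma'(t) \neq 0$ except at finitely many values of $t$; at all remaining $t$ the matrix $\mathrm{Jac}_{I_0}(\gamma(t))$ has nontrivial kernel, so $\det \mathrm{Jac}_{I_0}(\gamma(t)) = 0$, and therefore $\min_{I} |\det \mathrm{Jac}_I(\gamma(t))| = 0$. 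Since $\gamma(t) \in \mathrm{SOL}(f,g)$ for every $t$ and $\|\gamma(t)\| \to \infty$, the trace of $\gamma$ provides the required unbounded subset of the set in (ii).

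The main obstacle I anticipate is simply pinning down the curve-selection-at-infinity step with the right reference, since the classical curve selection lemma is local: one obtains the version at infinity either by inverting through the unit sphere (sending $x \mapsto x/\|x\|^2$ and applying the local lemma at $0$) or by invoking the semi-algebraic statement recorded in \cite{HaHV2017}. Everything else is bookkeeping: the decomposition $\mathrm{SOL}(f,g) = \bigcup_I \mathrm{SOL}_I$ is elementary, and the rank drop of $\mathrm{Jac}_{I_0}$ along $\gamma$ is a one-line chain-rule computation.
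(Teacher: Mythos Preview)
Your proof is correct and follows essentially the same route as the paper: curve selection at infinity plus differentiation along the curve to force a rank drop of some $\mathrm{Jac}_I$. The only cosmetic difference is the order of steps---you first decompose $\mathrm{SOL}(f,g)=\bigcup_I \mathrm{SOL}_I$ by pigeonhole and then run curve selection inside an unbounded piece $\mathrm{SOL}_{I_0}$, whereas the paper runs curve selection on $\mathrm{SOL}(f,g)$ first and then invokes the monotonicity theorem to show that along the curve a single index set $I$ works (and that $\phi'(t)\neq 0$ via strict monotonicity of $\|\phi(t)\|^2$); both arrive at $\mathrm{Jac}_{I}(\gamma(t))\gamma'(t)=0$ with $\gamma'(t)\neq 0$ in exactly the same way.
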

\begin{proof}
(ii) $\Rightarrow$ (i): Obviously.

(i) $\Rightarrow$ (ii): Assume that $\mathrm{SOL}(f, g)$ is unbounded. By the curve selection lemma at infinity \cite[Theorem~1.12]{HaHV2017},  we can find an analytic curve $\phi \colon (0, \epsilon) \to \mathbb{R}^n$ such that $\|\phi(t)\| \to +\infty$ as $t \to 0^+$ and
$\phi(t) \in \mathrm{SOL}(f, g)$ for all $t \in (0, \epsilon) .$ Then for each $i \in \{1, \ldots, n\},$ we have $(f_i \circ \phi)(t) \cdot (g_i \circ \phi)(t)  = 0$ for all $t \in (0, \epsilon).$ By the monotonicity theorem \cite[Theorem~1.8]{HaHV2017}, we can assume that the functions $f_i \circ \phi$ and $g_i \circ \phi$ are either constant or strictly increasing or strictly  decreasing on $(0, \epsilon)$ (after perhaps shrinking $\epsilon$). Hence, either $f_i \circ \phi \equiv 0$ or $g_i \circ \phi \equiv 0.$ Therefore, there exists an index set $I \subset \{1, \ldots, n\}$ such that
\begin{eqnarray*}
f_i \circ \phi \equiv 0 \textrm{ for } i \in I \quad \textrm{ and } \quad 
g_j \circ \phi \equiv 0 \textrm{ for } j \not \in I.
\end{eqnarray*}
Consequently, we have for all $t \in (0, \epsilon),$
\begin{eqnarray*}
\langle \nabla f_i(\phi(t)), \frac{d \phi(t)}{dt}\rangle  = 0 \textrm{ for } i \in I \quad \textrm{ and } \quad 
\langle \nabla g_j(\phi(t)), \frac{d \phi(t)}{dt}\rangle   = 0 \textrm{ for } j \not \in I,
\end{eqnarray*}
or equivalently, 
\begin{eqnarray} \label{PT3}
\mathrm{Jac}_I (\phi(t)) \frac{d \phi(t)}{dt} &=& 0.
\end{eqnarray}
On the other hand, since $\|\phi(t)\| \to +\infty$ as $t \to 0^+,$ the monotonicity theorem \cite[Theorem~1.8]{HaHV2017} gives us that the function $(0, \epsilon) \to \mathbb{R}, t \mapsto \|\phi(t)\|^2,$ is strictly increasing (after perhaps shrinking $\epsilon$ again). 
We deduce that for all $t > 0$ sufficiently small, $\frac{d \|\phi(t)\|^2}{dt} > 0$ and hence that $\frac{d \phi(t)}{dt} \ne 0.$ 
It follows immediately from~\eqref{PT3} that $\det \mathrm{Jac}_I(\phi(t)) = 0$ for  $t > 0$ sufficiently small, which completes the proof.
\end{proof}

\begin{proposition}\label{MD2}
Let $f, g \colon \mathbb{R}^n \to \mathbb{R}^n$ be polynomial maps of positive degrees $d_f$ and $d_g,$ respectively.
Then $\mathrm{SOL}(f^\infty, g^\infty) = \{0\}$ if, and only if, for any polynomial maps $p, q \colon \mathbb{R}^n \to \mathbb{R}^n$ of degree at most $d_f - 1$ and $d_g - 1$ respectively, the $\mathrm{PCP}(f + p, g + q)$ has a compact solution set.
\end{proposition}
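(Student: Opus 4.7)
The plan is to handle the two directions separately, with the easy direction by contrapositive and the hard one by a curve selection lemma argument at infinity, modeled on the proof of Proposition \ref{MD3}.

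For the \emph{only if} direction, I will argue by contrapositive. Note that $0 \in \mathrm{SOL}(f^\infty, g^\infty)$ trivially. If $\mathrm{SOL}(f^\infty, g^\infty) \ne \{0\},$ pick $x^* \ne 0$ in it; since $f^\infty$ and $g^\infty$ are homogeneous, the entire ray $\{t x^* : t \ge 0\}$ lies in $\mathrm{SOL}(f^\infty, g^\infty),$ so this set is unbounded. Now choose the specific perturbations $p := f^\infty - f$ and $q := g^\infty - g,$ which have degrees at most $d_f - 1$ and $d_g - 1$ respectively by the very definition of $f^\infty, g^\infty.$ Then $f + p = f^\infty$ and $g + q = g^\infty,$ so $\mathrm{SOL}(f + p, g + q) = \mathrm{SOL}(f^\infty, g^\infty)$ is not compact, the desired contradiction.

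For the \emph{if} direction, assume $\mathrm{SOL}(f^\infty, g^\infty) = \{0\},$ fix $p, q$ of degrees at most $d_f - 1$ and $d_g - 1,$ and note that $\mathrm{SOL}(f + p, g + q)$ is automatically closed (being the zero set of the continuous natural map). Suppose for contradiction it is unbounded. By the curve selection lemma at infinity \cite[Theorem~1.12]{HaHV2017}, there is an analytic curve $\phi \colon (0, \epsilon) \to \mathbb{R}^n$ with $\|\phi(t)\| \to +\infty$ as $t \to 0^+$ and $\phi(t) \in \mathrm{SOL}(f + p, g + q)$ for all $t.$ Exactly as in the proof of Proposition \ref{MD3}, the monotonicity theorem together with the complementarity relations yields (after shrinking $\epsilon$) an index set $I \subset \{1, \ldots, n\}$ such that $(f_i + p_i) \circ \phi \equiv 0$ for $i \in I,$ $(g_j + q_j) \circ \phi \equiv 0$ for $j \notin I,$ $(f_i + p_i)(\phi(t)) \ge 0$ for $i \notin I,$ and $(g_j + q_j)(\phi(t)) \ge 0$ for $j \in I.$

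Applying the growth dichotomy lemma \cite[Lemma~1.7]{HaHV2017} coordinate-wise, each $\phi_k$ has a Puiseux-type leading expansion in $t,$ so the unit vector $y(t) := \phi(t)/\|\phi(t)\|$ has a well-defined limit $y^* \in \mathbb{S}^{n-1}$ as $t \to 0^+.$ For $i \in I,$ expand
\begin{eqnarray*}
0 \ =\ (f_i + p_i)(\phi(t)) &=& \|\phi(t)\|^{d_f}\, f_i^\infty(y(t)) \ + \ O\bigl(\|\phi(t)\|^{d_f - 1}\bigr),
\end{eqnarray*}
since every term of $f_i - f_i^\infty$ has degree at most $d_f - 1$ and $\deg p_i \le d_f - 1.$ Dividing by $\|\phi(t)\|^{d_f}$ and passing to the limit gives $f_i^\infty(y^*) = 0$ for each $i \in I.$ The same argument, using $\deg q_j \le d_g - 1,$ gives $g_j^\infty(y^*) = 0$ for $j \notin I.$ The inequality cases for $i \notin I$ and $j \in I$ analogously yield $f_i^\infty(y^*) \ge 0$ and $g_j^\infty(y^*) \ge 0.$ Hence $f^\infty(y^*) \ge 0,$ $g^\infty(y^*) \ge 0,$ and in each coordinate $i$ at least one of $f_i^\infty(y^*), g_i^\infty(y^*)$ vanishes, so $\langle f^\infty(y^*), g^\infty(y^*)\rangle = 0.$ Therefore $y^* \in \mathrm{SOL}(f^\infty, g^\infty)$ with $\|y^*\| = 1,$ contradicting the standing hypothesis.

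The main obstacle is the asymptotic bookkeeping in the last step: one needs to see cleanly that the hypothesis $\deg p < d_f$ and $\deg q < d_g$ is exactly what ensures that the leading homogeneous parts $f^\infty, g^\infty$ dominate at infinity, so that complementarity at $\phi(t)$ forces complementarity of the limiting direction under $(f^\infty, g^\infty).$ Once that is set up, the rest is a direct mimicry of the argument used in Proposition \ref{MD3}.
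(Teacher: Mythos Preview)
Your two arguments are both correct, but you have swapped the labels: what you call the ``only if'' direction (starting from $\mathrm{SOL}(f^\infty, g^\infty) \ne \{0\}$ and exhibiting a bad pair $(p,q)$) is actually the contrapositive of the ``if'' direction, and vice versa. Once relabeled, your first paragraph coincides exactly with the paper's sufficiency proof, down to the same choice $p = f^\infty - f$, $q = g^\infty - g$.

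For the other implication (assuming $\mathrm{SOL}(f^\infty, g^\infty) = \{0\}$ and showing compactness of $\mathrm{SOL}(f+p,g+q)$), the paper takes a considerably shorter route than your curve-selection argument. It simply picks an unbounded sequence $x^k \in \mathrm{SOL}(f+p, g+q)$, rewrites $\min\{f(x^k)+p(x^k),\, g(x^k)+q(x^k)\} = 0$ as
\[
\min\Bigl\{\tfrac{f(x^k)+p(x^k)}{\|x^k\|^{d_f}},\ \tfrac{g(x^k)+q(x^k)}{\|x^k\|^{d_g}}\Bigr\} = 0,
\]
passes to a subsequence with $x^k/\|x^k\| \to x$, and uses continuity of $\min$ together with the fact that the sub-leading terms vanish after normalization to conclude $\min\{f^\infty(x), g^\infty(x)\} = 0$ with $\|x\| = 1$. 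No curve selection lemma, no index-set splitting, no Puiseux analysis. Your approach is a faithful adaptation of the machinery in Proposition~\ref{MD3} and does work, but here that machinery is overkill: the only point needed is that the leading homogeneous parts dominate at infinity, and a plain compactness-of-the-sphere argument captures this directly.
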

\begin{proof}
{\em Necessity.} By contradiction, suppose that there exist polynomial maps $p, q \colon \mathbb{R}^n \to \mathbb{R}^n$ of degrees at most $d_f - 1$ and $d_g - 1$ respectively such that the set $\mathrm{SOL}(f + p, g + q)$ is unbounded. Then there exists a sequence $\{x^k\} \subset \mathbb{R}^n$ such that $\|x^k\| \to \infty$ as $k \to \infty$ and
$$\min\{f(x^k) + p(x^k), g(x^k) + q(x^k)\} = 0 \quad \textrm{ for all } k.$$
Equivalently,
$$\min \left\{\frac{f(x^k) + p(x^k)}{\|x^k\|^{d_f}}, \frac{g(x^k) + q(x^k)}{\|x^k\|^{d_g}} \right\} = 0 \quad \textrm{ for all } k.$$
Let $k \to \infty$ and assume (without loss of generality) $\lim \frac{x^k}{\|x^k\|} = x.$ Then it is not hard to see that
\begin{eqnarray*}
\frac{f(x^k)}{\|x^k\|^{d_f}} \to f^\infty(x), \quad \frac{p(x^k)}{\|x^k\|^{d_f}} \to  0, \quad 
\frac{g(x^k)}{\|x^k\|^{d_g}} \to g^\infty(x), \quad \frac{q(x^k)}{\|x^k\|^{d_g}} \to 0.
\end{eqnarray*}
Therefore, $\min\{f^\infty(x), g^\infty(x)\} = 0,$ or equivalently, $x \in \mathrm{SOL}(f^\infty, g^\infty).$ By the assumption, then $x = 0.$ As $\|x\| = 1,$ we reach a contradiction.

{\em Sufficiency.} By definition, $p := f^\infty - f$ and $q := g^\infty - g$ are polynomial maps of degree at most $d_f - 1$ and $d_g - 1,$ respectively. By the assumption, 
the $\mathrm{PCP}(f^\infty, g^\infty) = \mathrm{PCP}(f + p, g + q)$ has a compact solution set. This, in turn, implies that $\mathrm{SOL}(f^\infty, g^\infty) = \{0\}$ because the polynomial maps $f^\infty$ and $g^\infty$ are homogeneous.
\end{proof}

\begin{remark}{\rm
As the leading terms $f^\infty$ and $g^\infty$ are homogeneous, $\mathrm{SOL}(f^\infty, g^\infty)$ contains $0$ and is invariant under multiplication by positive numbers. Moreover, it is clear that
\begin{eqnarray*}
\mathrm{SOL}(f^\infty, g^\infty) = \{0\} \quad & \Longleftrightarrow & \quad [\min\{f^\infty(x), g^\infty(x)\} = 0  \Leftrightarrow x = 0].
\end{eqnarray*}
In particular, when $f$ is the identity map and $g$ is an affine map (i.e., $g$ is a polynomial map of degree $1$), the condition that $\mathrm{SOL}(f^\infty, g^\infty) = \{0\}$ means that the matrix associated to the linear map $g^\infty$ is an {\em $R_0$-matrix} (see \cite{Cottle2009}).
}\end{remark}

\begin{proposition} \label{MD4}
Let $f, g \colon \mathbb{R}^n \to \mathbb{R}^n$ be polynomial maps of positive degrees $d_f$ and $d_g,$ respectively, 
and let $\frak{m}^\infty(x) := \min\{f^\infty(x), g^\infty(x)\}.$ Suppose the following conditions hold:
\begin{itemize}
\item[(i)] $\frak{m}^\infty(x) = 0 \Rightarrow x = 0;$ and

\item[(ii)] $\deg (\frak{m}^\infty, 0) \ne 0.$
\end{itemize}
Then, for any polynomial maps $p, q \colon \mathbb{R}^n \to \mathbb{R}^n$ of degree at most $d_f - 1$ and $d_g - 1$ respectively, 
the $\mathrm{PCP}(f + p, g + q)$ has a nonempty compact solution set.
 \end{proposition}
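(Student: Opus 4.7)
The plan is to treat compactness and nonemptiness of $\mathrm{SOL}(f+p, g+q)$ separately. Compactness is immediate from Proposition~\ref{MD2}: hypothesis~(i) is precisely the statement $\mathrm{SOL}(f^\infty, g^\infty) = \{0\}$, so Proposition~\ref{MD2} yields a compact solution set of $\mathrm{PCP}(f+p, g+q)$ for every admissible pair of perturbations $(p, q)$. It therefore remains to prove nonemptiness, and for this I would invoke the topological degree together with its homotopy invariance, as recalled in Section~\ref{Section2}.

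Write $\mathfrak{m}(x) := \min\{f(x)+p(x),\ g(x)+q(x)\}$ for the natural map of $\mathrm{PCP}(f+p, g+q)$, and define the continuous homotopy
$$H(x, t) := \min\bigl\{(1-t)f^\infty(x) + t[f(x) + p(x)],\ (1-t)g^\infty(x) + t[g(x) + q(x)]\bigr\},$$
so that $H(\cdot, 0) = \mathfrak{m}^\infty$ and $H(\cdot, 1) = \mathfrak{m}$. Denote by $F(x, t)$ and $G(x, t)$ the two arguments of the $\min$. The crucial step is boundedness of
$$Z := \{x \in \mathbb{R}^n : H(x, t) = 0 \text{ for some } t \in [0, 1]\}.$$
Suppose, for contradiction, that there are sequences $t_k \in [0, 1]$ and $x_k \in Z$ with $\|x_k\| \to \infty$; extract a subsequence so that $t_k \to t_0$ and $x_k/\|x_k\| \to \bar{x}$ with $\|\bar{x}\| = 1$. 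The equation $H_i(x_k, t_k) = 0$ forces $F_i(x_k, t_k) \ge 0$, $G_i(x_k, t_k) \ge 0$, and $F_i(x_k, t_k) G_i(x_k, t_k) = 0$ for each $i$. By construction the degree-$d_f$ part of $F_i(\cdot, t)$ equals $f_i^\infty$ for every $t$, while the remaining terms have degree at most $d_f - 1$; dividing by $\|x_k\|^{d_f}$ therefore yields $F_i(x_k, t_k)/\|x_k\|^{d_f} \to f_i^\infty(\bar{x})$, and the analogous scaling by $\|x_k\|^{d_g}$ gives the $g$-side limit. Passing to the limit leaves $f_i^\infty(\bar{x}) \ge 0$, $g_i^\infty(\bar{x}) \ge 0$, and $f_i^\infty(\bar{x}) g_i^\infty(\bar{x}) = 0$ for each $i$; that is, $\bar{x} \in \mathrm{SOL}(f^\infty, g^\infty) = \{0\}$ by~(i), contradicting $\|\bar{x}\| = 1$.

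Once $Z$ is known to be bounded, pick any bounded open $\Omega$ containing $\overline{Z}$. Since the zeros of $H(\cdot, t)$ lie in $Z \subset \Omega$ for every $t$, the homotopy invariance property yields
$$\deg(\mathfrak{m}, \Omega, 0) \ = \ \deg(H(\cdot, 1), \Omega, 0) \ = \ \deg(H(\cdot, 0), \Omega, 0) \ = \ \deg(\mathfrak{m}^\infty, \Omega, 0),$$
and hypothesis~(i) makes $0$ the unique zero of $\mathfrak{m}^\infty$, so the last quantity coincides with the local index $\deg(\mathfrak{m}^\infty, 0)$, which is nonzero by~(ii). Hence $\deg(\mathfrak{m}, \Omega, 0) \ne 0$ and $\mathfrak{m}$ must vanish at some point of $\Omega$, proving $\mathrm{SOL}(f+p, g+q) \ne \emptyset$. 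I expect the boundedness of $Z$ to be the main technical obstacle: it depends on the chosen homotopy preserving $f^\infty$ and $g^\infty$ as the top-degree parts of both arguments of the $\min$ for every $t \in [0, 1]$, so that the low-order perturbations are annihilated by the rescaling $x \mapsto x/\|x_k\|$; a homotopy formed at the outer level, such as $(1-t)\mathfrak{m}^\infty + t\mathfrak{m}$, would not enjoy this property and the rescaling argument would collapse.
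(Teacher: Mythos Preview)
Your proof is correct and follows essentially the same approach as the paper: the same linear homotopy inside the $\min$, the same normalization argument for boundedness of the zero set (which the paper merely sketches by pointing to the proof of Proposition~\ref{MD2}), and the same appeal to homotopy invariance of degree together with Proposition~\ref{MD2} for compactness. Your write-up simply spells out in full the ``standard argument'' that the paper leaves implicit.
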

\begin{proof}
Let $p, q \colon \mathbb{R}^n \to \mathbb{R}^n$ be polynomial maps of degree at most $d_f - 1$ and $d_g - 1$ respectively. Consider the homotopy
$$H(x, t) := \min \left\{(1 - t) f^\infty(x) + t (f(x) + p(x)), (1 - t) g^\infty(x) + t (g(x) + q(x)) \right\},$$
where $(x, t) \in \mathbb{R}^n \times [0, 1].$ Then 
$$H(x, 0) = \frak{m}^\infty(x) \quad \textrm{ and } \quad H(x, 1) = \min\{f(x) + p(x), g(x) + q(x)\}.$$
Since the condition (i) holds, a standard argument (as in the proof of Proposition~\ref{MD2}) shows that the set
\begin{eqnarray*}
\{x \in \mathbb{R}^n \ : \ H(x, t) = 0 \quad \textrm{ for some } \quad t \in [0, 1]\}
\end{eqnarray*}
is bounded, hence contained in some bounded open set $\Omega$ in $\mathbb{R}^n.$ Then, by the homotopy invariance property of degree, we have
\begin{eqnarray*}
\deg (H(\cdot, 1), \Omega, 0) \ = \ \deg (H(\cdot, 0), \Omega, 0) &=& \deg (\frak{m}^\infty, 0) \ \ne \ 0.
\end{eqnarray*}
So, $H(\cdot, 1)$ has a zero in $\Omega.$ This proves that the $\mathrm{PCP}(f + p, g + q)$ has a solution.
Finally, the compactness of the solution set $\mathrm{SOL}(f + p, g + q)$ follows immediately from Proposition~\ref{MD2}.
\end{proof}

\begin{proposition} \label{MD8}
Let $f, g \colon \mathbb{R}^n \to \mathbb{R}^n$ be polynomial maps of positive degrees $d_f$ and $d_g,$ respectively, 
and let $\frak{m}^\infty(x) := \min\{f^\infty(x), g^\infty(x)\}.$ Suppose there exists a vector $x^{\mathrm{ref}} \in \mathbb{R}^n$ such that the set
$$\{x \in \mathbb{R}^n \ : \ \langle x - x^{\mathrm{ref}}, \frak{m}^\infty(x) \rangle \le 0\}$$
is bounded. Then, for any polynomial maps $p, q \colon \mathbb{R}^n \to \mathbb{R}^n$ of degrees at most $d_f - 1$ and $d_g - 1$ respectively, 
the $\mathrm{PCP}(f + p, g + q)$ has a nonempty compact solution set.
\end{proposition}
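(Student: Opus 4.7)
The plan is to reduce Proposition~\ref{MD8} to Proposition~\ref{MD4} by verifying both of its hypotheses: (i) $\mathfrak{m}^\infty(x) = 0 \Rightarrow x = 0$, and (ii) $\deg(\mathfrak{m}^\infty, 0) \ne 0$. The boundedness hypothesis on the sublevel set $\{x : \langle x - x^{\mathrm{ref}}, \mathfrak{m}^\infty(x)\rangle \le 0\}$ will be used in two different ways to obtain these two conclusions.

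For condition~(i), I would argue as follows. Suppose $\mathfrak{m}^\infty(x_0) = 0$ for some $x_0 \ne 0$. By definition of $\mathfrak{m}^\infty$, this means $f^\infty(x_0) \ge 0$, $g^\infty(x_0) \ge 0$, and $\langle f^\infty(x_0), g^\infty(x_0)\rangle = 0$. Using homogeneity of $f^\infty$ and $g^\infty$ of degrees $d_f$ and $d_g$, I get for every $t > 0$ that $f^\infty(tx_0) = t^{d_f} f^\infty(x_0) \ge 0$ and $g^\infty(tx_0) = t^{d_g} g^\infty(x_0) \ge 0$, while the inner product scales by $t^{d_f + d_g}$ and hence stays zero. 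So $\mathfrak{m}^\infty(tx_0) = 0$, which trivially gives $\langle tx_0 - x^{\mathrm{ref}}, \mathfrak{m}^\infty(tx_0)\rangle = 0$ for every $t > 0$. The ray $\{tx_0 : t > 0\}$ therefore sits inside the bounded set of the hypothesis, an absurdity.

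For condition~(ii), I would run a degree homotopy argument. Consider the continuous homotopy
\begin{eqnarray*}
H(x, t) &:=& t\,\mathfrak{m}^\infty(x) + (1 - t)(x - x^{\mathrm{ref}}), \qquad (x, t) \in \mathbb{R}^n \times [0, 1].
\end{eqnarray*}
At $t = 0$ the only zero is $x = x^{\mathrm{ref}}$; at $t = 1$ the only zero is $x = 0$ by~(i); and for $t \in (0, 1)$, if $H(x, t) = 0$ then taking the inner product with $x - x^{\mathrm{ref}}$ yields
\begin{eqnarray*}
t\,\langle x - x^{\mathrm{ref}}, \mathfrak{m}^\infty(x)\rangle &=& -(1 - t)\|x - x^{\mathrm{ref}}\|^2 \ \le \ 0,
\end{eqnarray*}
so $\langle x - x^{\mathrm{ref}}, \mathfrak{m}^\infty(x)\rangle \le 0$ and hence $x$ lies in the bounded set of the hypothesis. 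Thus the total zero set of $H$ is bounded, and by choosing a large enough open ball $\Omega$ containing it and containing both $0$ and $x^{\mathrm{ref}}$, the homotopy invariance of degree yields
\begin{eqnarray*}
\deg(\mathfrak{m}^\infty, \Omega, 0) &=& \deg(\mathrm{Id} - x^{\mathrm{ref}}, \Omega, 0) \ = \ 1.
\end{eqnarray*}
Since~(i) already tells us that $0$ is the only zero of $\mathfrak{m}^\infty$, this common value equals $\deg(\mathfrak{m}^\infty, 0)$, verifying~(ii). Proposition~\ref{MD4} then delivers the conclusion.

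The one step to be careful about is the homotopy bookkeeping: the map $\mathfrak{m}^\infty$ is only continuous and semialgebraic (not polynomial), so I have to invoke the topological degree in its general continuous form rather than relying on polynomial/Bezout machinery. Apart from that, the argument is a textbook Poincar\'e--Bohl-type normalisation against the reference point $x^{\mathrm{ref}}$, and no further delicate estimates are needed.
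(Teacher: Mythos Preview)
Your proposal is correct and follows essentially the same route as the paper: both verify the two hypotheses of Proposition~\ref{MD4}, first observing that the zero set of $\mathfrak{m}^\infty$ is contained in the bounded sublevel set and hence (by homogeneity) reduces to $\{0\}$, and then computing $\deg(\mathfrak{m}^\infty,0)=1$ via the homotopy $H(x,t)=(1-t)(x-x^{\mathrm{ref}})+t\,\mathfrak{m}^\infty(x)$ together with the inner-product trick against $x-x^{\mathrm{ref}}$. The only cosmetic difference is that the paper phrases the boundedness of the homotopy zero set as a sequential contradiction, whereas you split into the cases $t=0$, $t=1$, $t\in(0,1)$; the substance is identical.
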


\begin{proof}
It suffices to show that all the assumptions of Proposition~\ref{MD4} are satisfied. 
Indeed, note, by assumption, that the set $\{x \in \mathbb{R}^n \ : \ \frak{m}^\infty(x) = 0\}$ is compact. Since the polynomial maps $f^\infty$ and $g^\infty$ are homogeneous, this implies that
$$\frak{m}^\infty(x) = 0 \quad \Rightarrow \quad x = 0.$$
We next prove that $\deg (\frak{m}^\infty, 0) = 1.$ To see this, consider the homotopy
$$H(x, t) := (1 - t) (x - x^{\mathrm{ref}}) + t \frak{m}^\infty(x), \quad (x, t) \in \mathbb{R}^n \times [0, 1].$$
Then $H(x, 0) = x - x^{\mathrm{ref}}$ and $H(x, 1) = \frak{m}^\infty(x).$

Let
\begin{eqnarray*}
X &:=& \{x \in \mathbb{R}^n \ : \ H(x, t) = 0 \quad \textrm{ for some } \quad t \in [0, 1]\}.
\end{eqnarray*}
We have $X$ is bounded. In fact, if it is not the case, then there exist sequences $\{x^k\} \subset \mathbb{R}^n$ with $\lim_{k \to \infty} \|x^k\| = +\infty$ and $\{t^k\} \subset [0, 1]$ such that $H(x^k, t^k) = 0$ for all $k.$ Then
\begin{eqnarray*}
(1 - t^k) \|x^k - x^{\mathrm{ref}}\|^2 + t^k \langle x^k - x^{\mathrm{ref}}, \frak{m}^\infty(x^k) \rangle \ = \ \langle x^k - x^{\mathrm{ref}}, H(x^k, t^k) \rangle &=& 0,
\end{eqnarray*}
which is impossible since $\lim_{k \to \infty} \|x^k - x^{\mathrm{ref}}\| = +\infty$ and $\langle x^k - x^{\mathrm{ref}}, \frak{m}^\infty(x^k) \rangle > 0$ for all $k$ sufficiently large.

Therefore, the set $X$ is contained in some bounded open set $\Omega$ in $\mathbb{R}^n.$ Since  $H(x, 0) = x - x^{\mathrm{ref}}$ and $x^{\mathrm{ref}} \in \Omega,$ it follows that  $\deg (H(\cdot, 0), \Omega, 0)$ is well defined and equal to unity. By the homotopy invariance property of degree, then
\begin{eqnarray*}
\deg (\frak{m}^\infty, 0) \ = \ \deg (H(\cdot, 1), \Omega, 0) \ = \ \deg (H(\cdot, 0), \Omega, 0) &=& 1.
\end{eqnarray*}
From Proposition~\ref{MD4}, we get the stated conclusion.
\end{proof}

\begin{remark}{\rm
Let $f, g \colon \mathbb{R}^n \to \mathbb{R}^n$ be polynomial maps of positive degrees $d_f$ and $d_g,$ respectively, and let $\frak{m}^\infty(x) := \min\{f^\infty(x), g^\infty(x)\}.$ Then it is not hard to see that the following conditions are equivalent:
\begin{itemize}
\item[(i)] $\frak{m}^\infty(x) = 0 \Rightarrow x = 0.$

\item[(ii)] There exists a constant $c > 0$ such that
\begin{eqnarray*}
\|\frak{m}^\infty(x)\|  & \ge & c \|x\|^{\min\{d_f, d_g\}} \quad \textrm{ for } \quad \|x\| \ge 1.
\end{eqnarray*}

\item[(iii)] There exists a constant $c > 0$ such that
\begin{eqnarray*}
\|\frak{m}^\infty(x) \| & \ge & c \|x\|^{\max\{d_f, d_g\}}  \quad \textrm{ for } \quad \|x\| \le 1.
\end{eqnarray*}
\end{itemize}
In particular, if one of the above equivalent conditions is satisfied, then for all $y \in \mathbb{R}^n$ with $y \ge 0,$ the set 
\begin{eqnarray*}
\{x \in \mathbb{R}^n \ : \ 0 \le \frak{m}(x) \le y\}
\end{eqnarray*}
is compact. As we shall not use these facts, we leave the proof as an exercise.
}\end{remark}

The next proposition is inspired by the results in \cite{Karamardian1976} and \cite{Gowda2017}.

\begin{proposition}\label{MD7}
Let $f, g \colon \mathbb{R}^n \to \mathbb{R}^n$ be polynomial maps of positive degrees $d_f$ and $d_g,$ respectively. Suppose the following two conditions hold:
\begin{itemize}
\item[(i)] $\mathrm{SOL}(f^\infty, g^\infty) = \{0\} = \mathrm{SOL}(f^\infty, g^\infty + d)$ for some vector $d > 0$ in $\mathbb{R}^n;$ and

\item[(ii)] The local (topological) degree of the polynomial map $f^\infty \colon \mathbb{R}^n \to \mathbb{R}^n$ at $0$ is well-defined and nonzero.
\end{itemize}
Then, for any polynomial maps $p, q \colon \mathbb{R}^n \to \mathbb{R}^n$ of degree at most $d_f - 1$ and $d_g - 1$ respectively, 
the $\mathrm{PCP}(f + p, g + q)$ has a nonempty compact solution set.
 \end{proposition}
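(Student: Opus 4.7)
The plan is to verify the hypotheses of Proposition~\ref{MD4} for the pair $(f, g).$ The condition $\frak{m}^\infty(x) = 0 \Rightarrow x = 0$ required there is precisely the first equality of hypothesis~(i). All the work lies in showing that the local degree $\deg(\frak{m}^\infty, 0)$ is nonzero, and the idea is to transport this degree via a homotopy to $\deg(f^\infty, 0),$ which is nonzero by hypothesis~(ii).

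Concretely, I would introduce the homotopy
$$H \colon \mathbb{R}^n \times [0, 1] \to \mathbb{R}^n, \qquad H(x, t) := \min\{f^\infty(x), g^\infty(x) + t d\},$$
so that $H(\cdot, 0) = \frak{m}^\infty$ while $H(\cdot, 1)$ is the natural map of the $\mathrm{PCP}(f^\infty, g^\infty + d).$ The crucial step is to prove that the set
$$Z := \{x \in \mathbb{R}^n \ : \ H(x, t) = 0 \textrm{ for some } t \in [0, 1]\}$$
is bounded. If it were not, I could choose sequences $\{x^k\} \subset \mathbb{R}^n$ and $\{t^k\} \subset [0, 1]$ with $\|x^k\| \to \infty$ and $H(x^k, t^k) = 0;$ passing to a subsequence, $y^k := x^k / \|x^k\|$ converges to some $y$ with $\|y\| = 1.$ For each $i,$ we have $f_i^\infty(x^k) \ge 0,$ $g_i^\infty(x^k) + t^k d_i \ge 0,$ and at least one of them vanishes. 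Dividing these by $\|x^k\|^{d_f},$ $\|x^k\|^{d_g}$ (and in the product relation by $\|x^k\|^{d_f + d_g}$) and exploiting the homogeneity of $f^\infty$ and $g^\infty,$ the perturbation $t^k d_i / \|x^k\|^{d_g}$ disappears in the limit since $d_g \ge 1.$ In the limit we obtain $f_i^\infty(y) \ge 0,$ $g_i^\infty(y) \ge 0,$ and $f_i^\infty(y) g_i^\infty(y) = 0,$ so $y \in \mathrm{SOL}(f^\infty, g^\infty) = \{0\},$ contradicting $\|y\| = 1.$

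With $Z$ bounded I would choose a bounded open $\Omega$ containing it and apply the homotopy invariance of degree to obtain
$$\deg(H(\cdot, 0), \Omega, 0) = \deg(H(\cdot, 1), \Omega, 0).$$
By hypothesis~(i), the map $H(\cdot, 0) = \frak{m}^\infty$ and the map $H(\cdot, 1)$ each have $0$ as their unique zero in $\mathbb{R}^n,$ so both sides equal the corresponding local degrees at $0.$ On a sufficiently small neighborhood of the origin, $g^\infty(0) = 0$ together with $d > 0$ forces $g_i^\infty(x) + d_i > f_i^\infty(x)$ for every $i,$ so $\min\{f^\infty(x), g^\infty(x) + d\} = f^\infty(x)$ there. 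Hence the local degree of $H(\cdot, 1)$ at $0$ coincides with $\deg(f^\infty, 0),$ which is nonzero by hypothesis~(ii). Chaining the identities gives $\deg(\frak{m}^\infty, 0) \ne 0,$ and Proposition~\ref{MD4} then yields the nonemptiness and compactness of $\mathrm{SOL}(f + p, g + q)$ for all admissible $p, q.$

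The main obstacle I anticipate is the boundedness of $Z$: the homotopy perturbs $g^\infty$ by the constant vector $t d,$ which has degree $0$ as opposed to the homogeneous degree $d_g$ of $g^\infty,$ so the rescaling must be performed carefully to ensure the perturbation is asymptotically negligible and that the residual limit equations genuinely describe a point of $\mathrm{SOL}(f^\infty, g^\infty).$ Once boundedness is established, the remaining degree identifications are routine localizations around the origin.
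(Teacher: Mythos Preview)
Your proposal is correct and follows essentially the same route as the paper: the same homotopy $H(x,t)=\min\{f^\infty(x),\,g^\infty(x)+td\}$, the same normalization argument for boundedness of its zero set, and the same localization near the origin to identify $\deg(H(\cdot,1),0)$ with $\deg(f^\infty,0)$ before invoking Proposition~\ref{MD4}. The paper merely abbreviates the boundedness step by pointing to the argument of Proposition~\ref{MD2}, whereas you spell it out explicitly.
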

 
\begin{proof}
By Proposition~\ref{MD4}, it suffices to show that the local (topological) degree of the (continuous) map 
$$\frak{m}^\infty \colon \mathbb{R}^n \to \mathbb{R}^n, \quad x \mapsto \min\{f^\infty(x), g^\infty(x)\},$$ at $0$ is well-defined and nonzero.
To see this, consider the homotopy
$$H(x, t) := \min \left\{f^\infty(x), (1 - t) g^\infty(x) + t (g^\infty(x) + d) \right\},$$
where $(x, t) \in \mathbb{R}^n \times [0, 1].$ Then 
$$H(x, 0) = \frak{m}^\infty(x) \quad \textrm{ and } \quad H(x, 1) = \min\{f^\infty(x), g^\infty(x) + d\}.$$
Since $\mathrm{SOL}(f^\infty, g^\infty) = \{0\},$ by a normalization argument (as in the proof of Proposition~\ref{MD2}), we see that the zero set
\begin{eqnarray*}
\{x \in \mathbb{R}^n \ : \ H(x, t) = 0 \quad \textrm{ for some } \quad t \in [0, 1]\}
\end{eqnarray*}
is bounded, hence contained in some bounded open set $\Omega$ in $\mathbb{R}^n.$ In particular, we get
$$\frak{m}^\infty(x) = 0 \quad \Rightarrow \quad x = 0.$$
Furthermore, by the homotopy invariance property of degree, we have
\begin{eqnarray*}
\deg (\frak{m}^\infty, 0) \ = \ \deg (H(\cdot, 0), \Omega, 0) &=& \deg (H(\cdot, 1), \Omega, 0).
\end{eqnarray*}

On the other hand, it is clear that when $x$ is close to zero, $f^\infty(x)$ and $g^\infty(x) + d$ are, respectively, close to $0$ and $g^\infty(0) + d = d > 0.$ Hence, for all $x$ close to zero, 
\begin{eqnarray*}
H(x, 1) &=& \min\{f^\infty(x), g^\infty(x) + d\} \ = \ f^\infty(x).
\end{eqnarray*}
This, together with the condition~(ii), implies that $\deg (H(\cdot, 1), \Omega, 0) \ne 0.$ Therefore,
\begin{eqnarray*}
\deg (\frak{m}^\infty, 0) &=& \deg (H(\cdot, 0), \Omega, 0) \ \ne \ 0.
\end{eqnarray*}
From Proposition~\ref{MD4}, we get the stated conclusion.
\end{proof}

The following result, which is inspired by \cite[Proposition~2.2.3]{Facchinei2003}, gives a sufficient condition under which the PCP$(f, g)$ has a nonempty compact solution set.
\begin{proposition}\label{MD5}
Let $f, g \colon \mathbb{R}^n \to \mathbb{R}^n$ be polynomial maps. If there exists a vector $x^{\mathrm{ref}} \in \mathbb{R}^n$ such that the set
$$\{x \in \mathbb{R}^n \ : \ \langle x - x^{\mathrm{ref}}, \frak{m}(x) \rangle \le 0\}$$
is bounded, then the $\mathrm{PCP}(f, g)$ has a nonempty compact solution set.
\end{proposition}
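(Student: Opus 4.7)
The plan is to imitate the homotopy argument used in Proposition~\ref{MD8}, but linearly deforming the identity-type map $x \mapsto x - x^{\mathrm{ref}}$ directly to the natural map $\mathfrak{m}$ instead of to $\mathfrak{m}^\infty$. Concretely, define
\begin{equation*}
H(x,t) \ := \ (1-t)\,(x - x^{\mathrm{ref}}) \ + \ t\,\mathfrak{m}(x), \qquad (x,t) \in \mathbb{R}^n \times [0,1],
\end{equation*}
which is continuous because $\mathfrak{m}$ is. Note $H(\cdot,0) = \mathrm{Id} - x^{\mathrm{ref}}$ and $H(\cdot,1) = \mathfrak{m}$.

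The first substantive step is to show that the zero set $X := \{x \in \mathbb{R}^n : H(x,t) = 0 \text{ for some } t \in [0,1]\}$ is bounded. If $H(x,t) = 0$, taking inner product with $x - x^{\mathrm{ref}}$ yields
\begin{equation*}
(1-t)\,\|x - x^{\mathrm{ref}}\|^2 \ + \ t\,\langle x - x^{\mathrm{ref}}, \mathfrak{m}(x) \rangle \ = \ 0.
\end{equation*}
If $t = 0$ this forces $x = x^{\mathrm{ref}}$; if $t > 0$ it gives $\langle x - x^{\mathrm{ref}}, \mathfrak{m}(x)\rangle = -\tfrac{1-t}{t}\|x - x^{\mathrm{ref}}\|^2 \le 0$. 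In either case $x$ lies in the set $\{y \in \mathbb{R}^n : \langle y - x^{\mathrm{ref}}, \mathfrak{m}(y) \rangle \le 0\}$, which is bounded by hypothesis. Hence $X$ is bounded, and we may choose a bounded open set $\Omega \subset \mathbb{R}^n$ containing both $X$ and the point $x^{\mathrm{ref}}$.

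By construction $0 \notin H(\partial\Omega, t)$ for all $t \in [0,1]$, so the homotopy invariance of the topological degree applies and gives
\begin{equation*}
\deg(\mathfrak{m}, \Omega, 0) \ = \ \deg(H(\cdot,1), \Omega, 0) \ = \ \deg(H(\cdot,0), \Omega, 0) \ = \ 1,
\end{equation*}
the last equality because $H(\cdot,0)$ is a translation sending the interior point $x^{\mathrm{ref}} \in \Omega$ to $0$. Since the degree is nonzero, the equation $\mathfrak{m}(x) = 0$ has a solution in $\Omega$, which means $\mathrm{SOL}(f,g) \neq \emptyset$.

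Finally, for compactness, every $x \in \mathrm{SOL}(f,g)$ satisfies $\mathfrak{m}(x) = 0$ and hence $\langle x - x^{\mathrm{ref}}, \mathfrak{m}(x)\rangle = 0 \le 0$, so $\mathrm{SOL}(f,g)$ sits inside the bounded set from the hypothesis; as it is also closed (being the zero set of the continuous map $\mathfrak{m}$), it is compact. The only mild subtlety, and the point to check carefully, is the boundedness of $X$ via the inner-product identity above; once that is in hand, everything follows from the same degree-theoretic template as Propositions~\ref{MD4} and~\ref{MD8}, so no serious obstacle is expected.
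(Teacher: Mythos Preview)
Your proof is correct and follows essentially the same approach as the paper: the same homotopy $H(x,t)=(1-t)(x-x^{\mathrm{ref}})+t\,\mathfrak{m}(x)$, the same inner-product identity to control the zero set, and the same degree computation. The only cosmetic difference is that the paper verifies boundedness of $X$ by a contradiction/sequence argument, whereas you argue directly that every zero of $H(\cdot,t)$ lies in the bounded set from the hypothesis; your version is in fact slightly cleaner.
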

\begin{proof}
Consider the homotopy
$$H(x, t) := (1 - t) (x - x^{\mathrm{ref}}) + t \frak{m}(x), \quad (x, t) \in \mathbb{R}^n \times [0, 1].$$
Then $H(x, 0) = x - x^{\mathrm{ref}}$ and $H(x, 1) = \frak{m}(x).$

We first show that the set 
\begin{eqnarray*}
X &:=& \{x \in \mathbb{R}^n \ : \ H(x, t) = 0 \quad \textrm{ for some } \quad t \in [0, 1]\}
\end{eqnarray*}
is bounded. Indeed, if it is not the case, then there exist sequences $\{x^k\} \subset \mathbb{R}^n$ with $\lim_{k \to \infty} \|x^k\| = +\infty$ and $\{t^k\} \subset [0, 1]$ such that $H(x^k, t^k) = 0$ for all $k.$ Then we have
\begin{eqnarray*}
(1 - t^k) \|x^k - x^{\mathrm{ref}}\|^2 + t^k \langle x^k - x^{\mathrm{ref}}, \frak{m}(x^k) \rangle \ = \ \langle x^k - x^{\mathrm{ref}}, H(x^k, t^k) \rangle &=& 0,
\end{eqnarray*}
which contradicts to the facts that $\lim_{k \to \infty} \|x^k - x^{\mathrm{ref}}\| = +\infty$ and $\langle x^k - x^{\mathrm{ref}}, \frak{m}(x^k) \rangle > 0$ for all $k$ sufficiently large.

Therefore, the set $X$ is contained in some bounded open set $\Omega$ in $\mathbb{R}^n.$ Since  $H(x, 0) = x - x^{\mathrm{ref}}$ and $x^{\mathrm{ref}} \in \Omega,$ it follows that  $\deg (H(\cdot, 0), \Omega, 0)$ is well defined and equal to unity. By the homotopy invariance property of degree, then
\begin{eqnarray*}
\deg (H(\cdot, 1), \Omega, 0) \ = \ \deg (H(\cdot, 0), \Omega, 0) &=& 1.
\end{eqnarray*}
So, $H(\cdot, 1),$ that is $\frak{m}$ has a zero in $\Omega.$ This proves that the $\mathrm{PCP}(f, g)$ has a solution.
Finally, the compactness of the solution set $\mathrm{SOL}(f, g)$ follows immediately from our assumption.
\end{proof}

The following notion generalizes the well-known notion of $P$-functions \cite{Cottle2009, Facchinei2003, More1973}.

\begin{definition}{\rm
A pair map $(f, g) \colon K \subset \mathbb{R}^n \to \mathbb{R}^n \times \mathbb{R}^n$ is said to be a {\em $P$-function} if for every $x, y$ in $K$ with $x \ne y,$ there exists an index $i$ such that
\begin{eqnarray*}
\big(f_i(x)  - f_i(y)\big) \big(g_i(x) - g_i(y) \big) &>& 0.
\end{eqnarray*}
}\end{definition}

\begin{proposition} \label{MD6}
Let $f, g \colon \mathbb{R}^n \to \mathbb{R}^n$ be polynomial maps. If the restriction of the map $(f, g)$ on the set
$$K := \{x \in \mathbb{R}^n \ : \ f(x) \ge 0, g(x) \ge 0\}$$ 
is a $P$-function, then the $\mathrm{PCP}(f, g)$ has at most one solution.
\end{proposition}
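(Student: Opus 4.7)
The plan is to proceed by contradiction: suppose that $x$ and $y$ are two distinct points in $\mathrm{SOL}(f,g)$. Since both lie in $K$, the $P$-function hypothesis on the restriction $(f,g)|_K$ supplies an index $i \in \{1,\ldots,n\}$ such that
\begin{equation*}
\bigl(f_i(x) - f_i(y)\bigr)\bigl(g_i(x) - g_i(y)\bigr) \ > \ 0.
\end{equation*}
The whole argument will consist in showing that this strict inequality is incompatible with the complementarity relations at $x$ and at $y$.

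To exploit those relations, I would use the fact that, at any solution $z \in \mathrm{SOL}(f,g)$, each coordinate satisfies $\min\{f_i(z), g_i(z)\} = 0$, together with $f_i(z) \ge 0$ and $g_i(z) \ge 0$. Applying this to both $x$ and $y$ at the special index $i$ above leaves four cases to consider, according to which of $\{f_i, g_i\}$ vanishes at $x$ and which vanishes at $y$. In each case I would verify directly that at least one of the two factors $f_i(x) - f_i(y)$ and $g_i(x) - g_i(y)$ is $\le 0$ while the other is $\ge 0$ (using only signs: e.g., if $f_i(x) = 0$ then $f_i(x) - f_i(y) = -f_i(y) \le 0$ because $f_i(y) \ge 0$, etc.), so that the product is $\le 0$. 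This contradicts the strict positivity above and completes the proof.

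There is essentially no obstacle here beyond a clean case analysis; the hypothesis that the $P$-function property is required only on $K$ (rather than on all of $\mathbb{R}^n$) is exactly what makes the argument work, since the sign information needed in the four cases is precisely what membership in $K$ provides. I would present the four cases compactly, perhaps by noting that one of $f_i(x), g_i(x)$ equals $0$ and one of $f_i(y), g_i(y)$ equals $0$, and then handling the two subcases (same coordinate vanishes at both, or different coordinates vanish) uniformly.
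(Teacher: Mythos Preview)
Your proposal is correct and follows the same contradiction strategy as the paper. The only difference is in execution: instead of a four-case analysis, the paper expands the product algebraically as
\[
\bigl(f_i(x)-f_i(y)\bigr)\bigl(g_i(x)-g_i(y)\bigr)
= f_i(x)g_i(x) + f_i(y)g_i(y) - f_i(y)g_i(x) - f_i(x)g_i(y)
= -\,f_i(y)g_i(x) - f_i(x)g_i(y) \le 0,
\]
using $f_i(x)g_i(x)=f_i(y)g_i(y)=0$ and nonnegativity of all terms; this handles all your cases at once and in fact shows the inequality holds for \emph{every} index $i$, not just the one furnished by the $P$-function hypothesis.
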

\begin{proof}
Suppose that the pair map $(f, g)$ is $P$-function on $K.$ If $x \ne x'$ are two distinct solutions of the $\mathrm{PCP}(f, g),$  we have for all $i = 1, \ldots, n,$ that
\begin{eqnarray*}
\big(f_i(x)  - f_i(x')\big) \big(g_i(x) - g_i(x') \big)
&=& f_i(x) g_i(x) + f_i(x') g_i(x') - f_i(x') g_i(x) - f_i(x) g_i(x') \\
&= & - f_i(x') g_i(x) - f_i(x) g_i(x') \ \le \ 0,
\end{eqnarray*}
which contradicts our assumption.
\end{proof}

The example below shows that even for a $P$-function, the corresponding complementarity problem may have no solution.
\begin{example}{\rm
Consider the problem $\mathrm{PCP}(f, g),$ where
$$f(x,y) = g(x, y) := (x, xy - 1) \quad \textrm{ for } \quad (x, y) \in \mathbb{R}^2.$$ 
It is easily seen that the restriction of $(f, g)$ on the set
\begin{eqnarray*}
\{(x, y) \in \mathbb{R}^2 \ : \ x \ge 0, xy - 1 \ge 0\} 
\end{eqnarray*}
is a $P$-function. Nevertheless the $\mathrm{PCP}(f, g)$ has no solution.
}\end{example}

On the other hand, it is clear that the two component maps of a $P$-function must be injective. This observation leads to the next result.

\begin{proposition}
Let $f, g \colon \mathbb{R}^n \to \mathbb{R}^n$ be polynomial maps. 
The $\mathrm{PCP}(f, g)$ has a nonempty compact solution set under either one of the following two
conditions:
\begin{itemize}
\item[(i)] the map $f$ is injective and the set
\begin{eqnarray*}
\{x \in \mathbb{R}^n \ : \ f(x) \ge 0 \textrm{ and } \langle f(x), g(x) \rangle \le 0\}
\end{eqnarray*}
is bounded;

\item[(ii)] the map $g$ is injective and the set
\begin{eqnarray*}
\{x \in \mathbb{R}^n \ : \ g(x) \ge 0 \textrm{ and } \langle f(x), g(x) \rangle \le 0\}
\end{eqnarray*}
is bounded.
\end{itemize}
\end{proposition}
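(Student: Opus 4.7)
The plan is to prove case~(i) by a homotopy and degree argument modelled on the proofs of Propositions~\ref{MD4} and~\ref{MD5}, and then to deduce case~(ii) from the symmetry $\mathrm{SOL}(f, g) = \mathrm{SOL}(g, f).$ Set
$$K_1 := \{x \in \mathbb{R}^n \ : \ f(x) \ge 0 \textrm{ and } \langle f(x), g(x) \rangle \le 0\},$$
which is bounded by hypothesis. Any solution of $\mathrm{PCP}(f, g)$ automatically lies in $K_1,$ so $\mathrm{SOL}(f, g) \subset K_1$ is bounded; being closed, it will be compact as soon as it is shown to be nonempty. The whole task is thus to produce a single solution.

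The homotopy I would use is
$$H(x, t) \ := \ \min\{f(x),\ t g(x) + (1 - t) e\}, \qquad e := (1, \ldots, 1),$$
so that $H(\cdot, 1) = \frak{m}$ while $H(\cdot, 0) = \min\{f(\cdot), e\}.$ For $t \in (0, 1]$ any zero $x$ of $H(\cdot, t)$ satisfies $f(x) \ge 0$ and
$$t \langle f(x), g(x) \rangle + (1 - t) \langle f(x), e \rangle \ = \ 0;$$
since $\langle f(x), e \rangle \ge 0,$ this forces $\langle f(x), g(x) \rangle \le 0$ and hence $x \in K_1.$ At $t = 0,$ the componentwise identity $\min\{f_i(x), 1\} = 0$ amounts to $f(x) = 0.$ Consequently the entire zero set of $H$ sits inside $K_1$ and can be enclosed in a bounded open set $\Omega.$

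The crucial ingredient is to evaluate $\deg(H(\cdot, 0), \Omega, 0).$ Because the polynomial map $f$ is injective, the theorem of Bialynicki-Birula and Rosenlicht (an injective polynomial self-map of $\mathbb{R}^n$ is automatically surjective, hence bijective) yields a unique $x^\ast \in \mathbb{R}^n$ with $f(x^\ast) = 0;$ clearly $x^\ast \in K_1 \subset \Omega.$ In a small neighborhood of $x^\ast$ each component of $f$ is strictly less than $1$ in absolute value, so $H(\cdot, 0) = f$ there. Moreover, invariance of domain makes $f$ a local homeomorphism at $x^\ast,$ so $\deg(f, x^\ast) = \pm 1.$ By homotopy invariance,
$$\deg(\frak{m}, \Omega, 0) \ = \ \deg(H(\cdot, 1), \Omega, 0) \ = \ \deg(H(\cdot, 0), \Omega, 0) \ = \ \pm 1 \ \ne \ 0,$$
so $\frak{m}$ has a zero in $\Omega$ and $\mathrm{SOL}(f, g) \ne \emptyset.$ Case~(ii) is obtained verbatim by swapping the roles of $f$ and $g.$

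The delicate step is establishing nonemptiness of $f^{-1}(0).$ If $f$ were merely continuous and injective, then $f(\mathbb{R}^n)$ could miss the origin, $H(\cdot, 0)$ would have no zeros at all, and the degree computation would collapse. It is precisely at this juncture that the polynomiality of $f$ must be exploited, through the Bialynicki-Birula--Rosenlicht surjectivity theorem; all remaining pieces---localization of the zeros of $H$ to $K_1,$ identification of $H(\cdot, 0)$ with $f$ near $x^\ast,$ and the invocation of homotopy invariance---are routine applications of the machinery already used throughout Section~\ref{Section4}.
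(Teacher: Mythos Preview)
Your proof is correct and follows essentially the same route as the paper: both invoke the Bialynicki-Birula--Rosenlicht theorem to make the injective polynomial map $f$ a homeomorphism, then run a homotopy-degree argument ending at $\frak{m}$ and starting at a map whose only zero is $x^\ast = f^{-1}(0).$ The sole difference is the choice of homotopy---the paper takes $H(x,t) = \min\{f(x),\ (1-t)f(x) + t g(x)\},$ so that $H(\cdot,0)=f$ on the nose, whereas your choice $\min\{f(x),\ (1-t)e + t g(x)\}$ requires the extra (but easy) localization $H(\cdot,0)=f$ near $x^\ast$; the boundedness of the zero set and the degree computation go through identically in both versions.
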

\begin{proof}
Without loss of generality, assume (i) holds. Since the map $f$ is injective, it follows from \cite{Bailynicki-Birula1962} that $f$ is surjective. Consequently, we can see that $f$ is a homeomorphism from $\mathbb{R}^n$ into itself. 

Consider the homotopy
$$H(x, t) := \min \left\{f(x), (1 - t) f(x) + t g(x)) \right\},$$
where $(x, t) \in \mathbb{R}^n \times [0, 1].$ Observe that 
$$H(x, 0) = f(x) \quad \textrm{ and } \quad H(x, 1) = \min\{f(x), g(x)\}.$$
We first show that the set 
\begin{eqnarray*}
X &:=& \{x \in \mathbb{R}^n \ : \ H(x, t) = 0 \quad \textrm{ for some } \quad t \in [0, 1]\}
\end{eqnarray*}
is bounded. Indeed, if it is not the case, then there exist sequences $\{x^k\} \subset \mathbb{R}^n$ with $\lim_{k \to \infty} \|x^k\| = +\infty$ and $\{t^k\} \subset \mathbb{R}$ such that $H(x^k, t^k) = 0$ for all $k.$ Clearly, the following facts hold:
\begin{eqnarray*}
f(x^k) \ge 0,  \quad (1 - t^k) f(x^k) + t^k g(x^k) \ge 0, \quad \langle f(x^k), (1 - t^k) f(x^k) + t^k g(x^k) \rangle = 0.
\end{eqnarray*}
Since the map $f$ is homeomorphism, $f(x^k) \ne 0$ for all $k$ sufficiently large. On the other hand, the assumption that the set
\begin{eqnarray*}
\{x \in \mathbb{R}^n \ : \ f(x) \ge 0 \textrm{ and } \langle f(x), g(x) \rangle \le 0\}
\end{eqnarray*}
is bounded implies that $\langle f(x^k) , g(x^k) \rangle > 0$ for all $k$ sufficiently large. 

Therefore, for all $k$ sufficiently large, we have
\begin{eqnarray*}
0 \ = \ \langle f(x^k), (1 - t^k) f(x^k) + t^k g(x^k) \rangle  & = & (1 - t^k) \|f(x^k)\|^2 + t^k \langle f(x^k) , g(x^k) \rangle \\
&\ge & \min\{\|f(x^k)\|^2, \langle f(x^k) , g(x^k) \rangle\} \ > \ 0,
\end{eqnarray*}
which is impossible.

Therefore, the set $X$ is contained in some bounded open set $\Omega$ in $\mathbb{R}^n.$ Since $f$ is a homeomorphism, there is a unique $x^* \in \mathbb{R}^n$ such that $f(x^*) = 0,$ and then $\deg (f, x^*)$ is equal to $1$ or $-1$. By the homotopy invariance property of degree, we get
\begin{eqnarray*}
\deg (H(\cdot, 1), \Omega, 0) \ = \ \deg (H(\cdot, 0), \Omega, 0) &=& \deg (f, x^*) \  = \ \pm 1.
\end{eqnarray*}
So, $H(\cdot, 1),$ that is $\min\{f(x), g(x)\}$ has a zero in $\Omega.$ This proves that the $\mathrm{PCP}(f, g)$ has a solution.
Finally, the compactness of the solution set $\mathrm{SOL}(f, g)$ follows immediately from our assumption.
\end{proof}

The next two propositions may be considered generalizations of \cite[Theorems~3.2~and~3.3]{Karamardian1972}.

\begin{proposition}\label{Proposition410}
Let $C \subset \mathbb{R}^n$ be a nonempty compact set such that for every $x \in \mathbb{R}^n \setminus C$ there exists a $y \in C$ satisfying $\langle x - y, \frak{m}(x)\rangle >0.$ Then, the $\mathrm{PCP}(f, g)$ has a nonempty compact solution set.
\end{proposition}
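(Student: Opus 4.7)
}
The plan is to follow the degree-theoretic homotopy strategy used in Proposition~\ref{MD5}, but to replace the single ``reference point'' $x^{\mathrm{ref}}$ by a point chosen inside the compact set $C.$ First I would make the preliminary observation that the hypothesis automatically forces $\mathrm{SOL}(f,g) \subset C,$ and hence the solution set is bounded; indeed, if some $x \notin C$ were a zero of $\mathfrak{m},$ then for every $y \in C$ we would have $\langle x - y, \mathfrak{m}(x)\rangle = 0,$ contradicting the assumption that this inner product is strictly positive for some $y \in C.$ Since $\mathrm{SOL}(f,g)$ is closed (zero set of the continuous map $\mathfrak{m}$), its compactness will follow as soon as nonemptiness is established.

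For nonemptiness, I would fix an arbitrary $x^{0} \in C$ and consider the homotopy
$$H(x,t) \ := \ (1-t)(x - x^{0}) + t\, \mathfrak{m}(x), \qquad (x,t) \in \mathbb{R}^n \times [0,1],$$
so that $H(\cdot,0)$ is the translated identity and $H(\cdot,1) = \mathfrak{m}.$ The core step is to show that the zero set
$$X \ := \ \{\, x \in \mathbb{R}^n \ :\ H(x,t) = 0 \ \text{ for some } t \in [0,1] \,\}$$
is bounded. The argument splits naturally by the value of $t$: the cases $t=0$ and $t=1$ are handled by the observation above (giving $x = x^{0}$ and $x \in \mathrm{SOL}(f,g) \subset C$ respectively). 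For $t \in (0,1)$ and a hypothetical zero $x \notin C,$ pick $y \in C$ with $\langle x - y, \mathfrak{m}(x)\rangle > 0$ by the assumption on $C,$ and rewrite $H(x,t)=0$ as $\mathfrak{m}(x) = -\frac{1-t}{t}(x - x^{0}).$ Substituting yields
$$\frac{1-t}{t}\, \langle x - y,\, x - x^{0}\rangle \ = \ -\,\langle x - y,\, \mathfrak{m}(x)\rangle \ < \ 0.$$
On the other hand, setting $r := \max_{z \in C} \|z\|,$ for every $y, x^{0} \in C$ one has
$\langle x - y,\, x - x^{0}\rangle \ \ge\ \|x\|^{2} - 2r\,\|x\| - r^{2},$
which is strictly positive as soon as $\|x\| > 3r.$ This uniform lower bound (taken over all admissible $y \in C$) contradicts the previous displayed inequality, and so $X$ is bounded.

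Having bounded $X,$ I would choose a bounded open set $\Omega \subset \mathbb{R}^n$ containing $X \cup C.$ Since $x^{0} \in \Omega,$ the map $H(\cdot,0) = \mathrm{id} - x^{0}$ is a homeomorphism sending $x^{0}$ to $0,$ and therefore $\deg(H(\cdot,0), \Omega, 0) = 1.$ By the homotopy invariance property of the topological degree recalled in Section~\ref{Section2},
$$\deg(\mathfrak{m}, \Omega, 0) \ = \ \deg(H(\cdot,1), \Omega, 0) \ = \ \deg(H(\cdot,0), \Omega, 0) \ = \ 1 \ \ne \ 0,$$
so $\mathfrak{m}$ vanishes at some point of $\Omega.$ Combined with the inclusion $\mathrm{SOL}(f,g) \subset C$ noted at the start, this proves that $\mathrm{PCP}(f,g)$ has a nonempty compact solution set.

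I expect the main obstacle to be the boundedness of $X$: one must extract a uniform estimate from an assumption that a priori allows the point $y \in C$ to depend on $x.$ The compactness of $C$ is exactly what saves the day, converting the $y$-dependent hypothesis into a genuinely uniform coercivity condition for $\|x\|$ large.
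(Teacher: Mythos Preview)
Your argument is correct, but it follows a genuinely different route from the paper's own proof. The paper does \emph{not} use degree theory here: instead, for each $u \in \mathbb{R}^n$ it sets $D_u := \{x \in C : \langle u - x, \mathfrak{m}(x)\rangle \ge 0\}$ and shows, via Karamardian's variational-inequality existence theorem on compact convex sets \cite[Theorem~2.1]{Karamardian1972}, that every finite subcollection $\{D_{u^1},\ldots,D_{u^m}\}$ has nonempty intersection. The finite intersection property then yields a point $x^* \in \bigcap_{u} D_u$, which satisfies $\langle u - x^*, \mathfrak{m}(x^*)\rangle \ge 0$ for all $u$ and hence $\mathfrak{m}(x^*)=0$. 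Your approach, by contrast, stays entirely within the degree-theoretic machinery already assembled in Section~\ref{Section2} and exploited in Propositions~\ref{MD4}--\ref{MD5}; the crucial observation is that compactness of $C$ upgrades the pointwise hypothesis (with $y$ depending on $x$) to the uniform coercivity estimate $\langle x-y,\,x-x^{0}\rangle \ge \|x\|^2 - 2r\|x\| - r^2$, valid for \emph{every} $y,x^{0}\in C$. This makes your proof more self-contained relative to the paper (no appeal to an external variational-inequality theorem) and incidentally yields the extra information $\deg(\mathfrak{m},\Omega,0)=1$; the paper's argument, on the other hand, avoids homotopies altogether and is closer in spirit to classical fixed-point proofs of complementarity existence results.
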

\begin{proof}
For each $u \in \mathbb{R}^n$ let
\begin{eqnarray*}
D_u &:=& \{x \in C \ : \ \langle u-x, \frak{m}(x) \rangle \geq 0\}.
\end{eqnarray*}
It is clear that $D_u$ is compact. Next, we will prove that the intersection of any finite of the $D_{u}$'s is nonempty, i.e, for arbitrary $u^i \in \mathbb{R}^n, i = 1, \ldots, m$ we have $\cap_{i=1}^m D_{u^i} \ne \emptyset.$
	 
To see this, let $D$ be the convex hull of $C \cup \{u^1, \ldots, u^m\}.$ Obviously $D$ is a nonempty compact convex subset in $\mathbb{R}^n.$ Hence, it follows from \cite[Theorem 2.1]{Karamardian1972} that there exists $\bar{x} \in D$ such that 
\begin{eqnarray}\label{ppt4}
\langle x - \bar{x}, \frak{m}(\bar{x})\rangle &\ge& 0 \quad \text{ for all } \quad x \in  D.
\end{eqnarray}
In particular, $\langle u^i - \bar{x}, \frak{m}(\bar{x})\rangle \geq 0$ for $i = 1, \ldots, m.$	 

If $\bar{x} \notin C$ then it follows from our assumption that there exists $y \in C$ such that
\begin{eqnarray*}
\langle \bar{x} - y, \frak{m}(\bar{x}) \rangle &>& 0,
\end{eqnarray*} 
which contradicts \eqref{ppt4}. Hence $\bar{x} \in C$ and so $\bar{x} \in D_{u^i}$ for all $i = 1, \ldots, m.$ From the finite intersection property of compact sets we have $\cap_{u \in \mathbb{R}^n} D_{u} \ne \emptyset,$ which yields the existence of a point $x^* \in C$ satisfying the condition
\begin{eqnarray*}
\langle u- x^*, \frak{m}(x^*) \rangle &\ge&  0 \quad \text{ for all } \quad u \in \mathbb{R}^n.
\end{eqnarray*}
This implies easily that $\frak{m}(x^*) = 0$ and so $x^* \in \mathrm{Sol}(f, g).$ 

Finally, it follows easily from the assumption that the solution set $\mathrm{Sol}(f, g)$ is contained in the set $C$ and so it is a compact.
\end{proof}	 

\begin{corollary}
If there exists a real number $c >0$ such that 
\begin{eqnarray*}
\langle x, \frak{m}(x)-\frak{m}(0) \rangle \geq c \| x \|^2 \quad \textrm{ for } \quad \|x\| > \frac{\| \frak{m}(0) \|}{c},
\end{eqnarray*}
then the $\mathrm{PCP}(f, g)$ has a nonempty compact solution set.
\end{corollary}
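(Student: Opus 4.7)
The plan is to derive the corollary as a direct application of Proposition~\ref{MD5}, taking the reference point $x^{\mathrm{ref}} := 0$. Under this choice, the sublevel set appearing in the hypothesis of Proposition~\ref{MD5} becomes
$$S \ := \ \{x \in \mathbb{R}^n \ : \ \langle x, \frak{m}(x)\rangle \le 0\},$$
so the whole task reduces to showing that $S$ is bounded.

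To do this, I would split $\langle x, \frak{m}(x)\rangle = \langle x, \frak{m}(0)\rangle + \langle x, \frak{m}(x) - \frak{m}(0)\rangle$ and estimate the two pieces separately. The assumed coercivity condition bounds the second summand below by $c\|x\|^2$ whenever $\|x\| > \|\frak{m}(0)\|/c$, while the Cauchy--Schwarz inequality bounds the first summand below by $-\|x\| \cdot \|\frak{m}(0)\|$. Adding these two lower bounds gives
$$\langle x, \frak{m}(x)\rangle \ \ge \ \|x\|\bigl(c\|x\| - \|\frak{m}(0)\|\bigr),$$
which is strictly positive in the regime $\|x\| > \|\frak{m}(0)\|/c$. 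Consequently $S$ is contained in the closed ball of radius $\|\frak{m}(0)\|/c$ about the origin, and is therefore bounded.

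Once the boundedness of $S$ is established, Proposition~\ref{MD5} delivers at once the conclusion that $\mathrm{PCP}(f, g)$ has a nonempty compact solution set. The argument is essentially mechanical and presents no genuine obstacle; the key conceptual point is only the observation that the assumed coercivity condition is tailor-made to supply the boundedness hypothesis of Proposition~\ref{MD5} at the natural choice $x^{\mathrm{ref}} = 0$. (Alternatively, one could apply Proposition~\ref{Proposition410} with $C$ equal to the same closed ball and $y := 0$, leading to the identical chain of estimates.)
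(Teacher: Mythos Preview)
Your proof is correct and the computational core---splitting $\langle x,\frak{m}(x)\rangle$ into $\langle x,\frak{m}(0)\rangle + \langle x,\frak{m}(x)-\frak{m}(0)\rangle$, bounding the first piece via Cauchy--Schwarz and the second via the hypothesis---is exactly what the paper does. The only cosmetic difference is that the paper invokes Proposition~\ref{Proposition410} (with $C$ the closed ball of radius $\|\frak{m}(0)\|/c$ and $y=0$) rather than Proposition~\ref{MD5}; you yourself note this alternative in your parenthetical remark, and the two routes are equivalent here.
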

\begin{proof}
Let
\begin{eqnarray*}
C &:=& \left\{x \in \mathbb{R}^n \ : \  \| x \| \leq \frac {\| \frak{m}(0) \|}{c}\right\}.
\end{eqnarray*}
Clearly $C$ is compact and contains $0$. Take any $x \notin C,$ i.e.,
\begin{eqnarray*}
c \| x \| &>& \| \frak{m}(0) \|. 
\end{eqnarray*}
This, together with Schwartz's inequality, implies that
\begin{eqnarray*}	
c \| x \|^2  & > & \| x \| \cdot \| \frak{m}(0) \|   \ \geq \ - \langle x, \frak{m}(0) \rangle.
 \end{eqnarray*}
By assumption, then
\begin{eqnarray*}
\langle x, \frak{m}(x) \rangle & \geq & \langle x, \frak{m}(0) \rangle + c  \| x \|^2 \\
&>& \langle x, \frak{m}(0) \rangle -\langle x, \frak{m}(0)  \rangle \ = \ 0.
\end{eqnarray*}
Therefore,
\begin{eqnarray*}
\langle x - 0, \frak{m}(x) \rangle &=& \langle x, \frak{m}(x) \rangle \ > \ 0. 
\end{eqnarray*}
From Proposition~\ref{Proposition410}, we get the desired conclusion.
\end{proof}

\begin{proposition} 
Let $C$ be a nonempty, compact and convex subset in $\mathbb{R}^n$ such that the origin $0$ belongs to the interior of $C$ and that $\langle x, \frak{m}(x) \rangle \geq 0$ for all $x \in \partial C$-the boundary of $C.$ Then, the $\mathrm{PCP}(f, g)$ has a solution.
\end{proposition}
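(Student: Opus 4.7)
The plan is to adapt the homotopy-and-degree argument used in Proposition~\ref{MD5}, but with the ball-like set $C$ replacing the global space, so that the boundary condition $\langle x, \frak{m}(x)\rangle \ge 0$ on $\partial C$ plays the role of the coercivity assumption there. The reference point will simply be the origin.

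First, I would dispose of the easy case: if there exists $x^\ast \in \partial C$ with $\frak{m}(x^\ast) = 0$, then $x^\ast \in \mathrm{SOL}(f,g)$ and we are done. So assume from now on that $\frak{m}(x) \ne 0$ for every $x \in \partial C$. Next, introduce the homotopy
\begin{eqnarray*}
H \colon C \times [0,1] \to \mathbb{R}^n, \qquad H(x,t) := (1-t)\,x + t\,\frak{m}(x),
\end{eqnarray*}
so $H(\cdot,0) = \mathrm{Id}$ and $H(\cdot,1) = \frak{m}$. The key verification is that $H(x,t) \ne 0$ for all $(x,t) \in \partial C \times [0,1]$. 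Indeed, suppose $H(x,t) = 0$ for some such $(x,t)$. Taking the inner product with $x$ yields
\begin{eqnarray*}
(1-t)\,\|x\|^2 + t\,\langle x, \frak{m}(x)\rangle \ = \ 0.
\end{eqnarray*}
Since $0 \in \mathrm{int}\,C$ and $x \in \partial C$, we have $x \ne 0$, hence $\|x\|^2 > 0$. The assumption $\langle x,\frak{m}(x)\rangle \ge 0$ on $\partial C$ then forces $t = 1$ and $\frak{m}(x) = 0$, contradicting the case we are in.

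Now I would apply the homotopy invariance of the topological degree on the bounded open set $\Omega := \mathrm{int}\,C$, whose boundary is $\partial C$ (because $C$ is convex with nonempty interior, so $\overline{\mathrm{int}\,C} = C$). The above verification says that $0 \notin H(\partial\Omega,t)$ for every $t \in [0,1]$, so
\begin{eqnarray*}
\deg(\frak{m},\Omega,0) \ = \ \deg(H(\cdot,1),\Omega,0) \ = \ \deg(H(\cdot,0),\Omega,0) \ = \ \deg(\mathrm{Id},\Omega,0) \ = \ 1,
\end{eqnarray*}
where the last equality uses $0 \in \Omega$. Since the degree is nonzero, $\frak{m}$ has a zero in $\Omega$, which is a solution of $\mathrm{PCP}(f,g)$.

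There is essentially no serious obstacle: everything reduces to the boundary check on the homotopy. The only point requiring a touch of care is the branching between the cases ``$\frak{m}$ vanishes somewhere on $\partial C$'' (direct conclusion) and ``$\frak{m}$ does not vanish on $\partial C$'' (degree argument); once that is separated, the computation with $\langle x, H(x,t)\rangle$ is one line, and invariance of degree closes the proof.
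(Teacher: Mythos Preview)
Your proof is correct but takes a genuinely different route from the paper's. The paper invokes Karamardian's variational inequality result \cite[Theorem~2.1]{Karamardian1972} to produce a point $\bar{x} \in C$ with $\langle x - \bar{x}, \frak{m}(\bar{x})\rangle \ge 0$ for all $x \in C$, and then shows $\frak{m}(\bar{x}) = 0$ by a case analysis on whether $\bar{x}$ lies on $\partial C$ or in the interior, plugging in points $\alpha_i e^i, \beta_i e^i \in \partial C$ to pin down each coordinate of $\frak{m}(\bar{x})$. Your approach instead runs a linear homotopy from the identity to $\frak{m}$ on $\Omega = \mathrm{int}\,C$ and uses invariance of degree, which is entirely in the spirit of Propositions~\ref{MD4}--\ref{MD5} and avoids the external fixed-point reference. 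What you gain is self-containment within the degree machinery already set up in Section~\ref{Section2}; what the paper's approach gains is that it does not need to separate off the case $\frak{m}|_{\partial C} \ne 0$ and works uniformly via the variational characterization. Both case splits (yours on whether $\frak{m}$ vanishes on $\partial C$, the paper's on whether $\bar{x} \in \partial C$) are doing analogous work.
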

	
\begin{proof}
By \cite[Theorem 2.1]{Karamardian1972}, there exists $\bar{x} \in C$ such that 
\begin{eqnarray}\label{ppt1}
\langle x - \bar{x}, \frak{m}(\bar{x}) \rangle & \geq & 0 \quad \textrm{ for all } \quad x \in C.
\end{eqnarray}		
Since $C$ contains the origin we also have
\begin{eqnarray}\label{ppt2}
\langle \bar{x}, \frak{m}(\bar{x})\rangle &\leq& 0. 
\end{eqnarray}	
We consider two cases:
	
\subsubsection*{Case 1: $\bar{x} \in \partial C$} It follows from the assumption that 
\begin{eqnarray*}
\langle \bar{x}, \frak{m}(\bar{x})\rangle & \geq & 0. 
\end{eqnarray*}
Combining this with \eqref{ppt2} we obtain $\langle \bar{x}, \frak{m}(\bar{x})\rangle = 0.$

On the other hand, for every $i=1, \ldots, n,$ there exist scalars $\alpha_i > 0$ and $\beta_i < 0$ such that $\alpha_i e^i \in \partial C$ and $\beta_i e^i \in \partial C,$
here $e^i$ is the $i$th unit vector in $\mathbb{R}^n.$ Substituting $x = \alpha_i e^i $ into \eqref{ppt1} we obtain
\begin{eqnarray*}
\langle\alpha_i e^i, \frak{m}(\bar{x}) \rangle & \geq & \langle \bar{x}, \frak{m}(\bar{x})\rangle  \ =\  0,
\end{eqnarray*}
which implies that
\begin{eqnarray*}
\alpha_i \cdot \min \{f_i(\bar{x}), g_i(\bar{x})\} &\ge& 0.
\end{eqnarray*}
Consequently, $\min \{f_i(\bar{x}), g_i(\bar{x}) \}\geq 0$ because $\alpha_i$ is positive. Therefore, 
\begin{eqnarray*}
\frak{m}(\bar{x})  &\ge&  0.
\end{eqnarray*}
Similarly, since $\beta_i$ is negative, we also have
\begin{eqnarray*}
\frak{m}(\bar{x}) &\le&  0.
\end{eqnarray*}
Hence $\frak{m}(\bar{x}) = 0$ and so $\bar{x} \in \mathrm{Sol}(f, g).$ 
	
\subsubsection*{Case 2: $\bar{x} \notin \partial C$} Then for all $i = 1, \ldots, n$ and for all $|t|$ small enough, we have $\bar{x} + te^i \in C,$ which together with \eqref{ppt1} gives 
\begin{eqnarray*}
\langle te^i, \frak{m}(\bar{x}) \rangle &\ge& 0.
\end{eqnarray*}
Clearly, this implies that $\frak{m}(\bar{x}) = 0$ and so $\bar{x} \in \mathrm{Sol}(f,g).$ The proof is completed.
\end{proof}

\section{Error bounds} \label{Section5}

In this section, we establish some error bound results for the solution set of polynomial complementarity problems in terms of the natural map with explicit exponents.

Recall that, given polynomial maps $f, g \colon \mathbb{R}^n \to \mathbb{R}^n$ of degree at most $d \ge 1,$ the solution set $\mathrm{SOL}(f, g)$ of the PCP$(f, g)$ is the set of
vectors $x \in \mathbb{R}^n$ satisfying the following constraints
\begin{equation*}
f(x) \ \ge \ 0, \quad g(x) \ \ge \  0, \quad \textrm{ and } \quad \langle f(x), g(x) \rangle \ = \ 0.
\end{equation*}
Clearly, this is a polynomial system with one equality and $2n$ inequalities and with the maximum degree $2d.$ By Lemma~\ref{Lemma21}, for any compact set $K \subset \mathbb{R}^n,$ we may find a constant $c > 0$ satisfying the H\"olderian error bound
\begin{eqnarray*}
c\, \mathrm{dist}(x, \mathrm{SOL}(f, g))^{\alpha} & \le & \sum_{i = 1}^n \left([-f_i(x)]_+ + [-g_i(x)]_+ \right) + | \langle f(x), g(x) \rangle | \quad \textrm{ for all } \quad x \in K,
\end{eqnarray*}
where $\alpha := \mathscr{R}(3n, 2d + 1).$ On the other hand, using the natural map $\frak{m},$ we can improve this error bound, and also strengthen and generalize the recent result of Hu, Wang and Huang \cite{Hu2018-2}.

\begin{theorem} \label{Theorem11}
Let $f, g \colon \mathbb{R}^n \to \mathbb{R}^n$ be polynomial maps of degree at most $d \ge 1.$ For any compact set $K \subset \mathbb{R}^n,$ there exists a constant $c > 0$ such that
\begin{eqnarray}\label{PT2}
c\, \mathrm{dist}(x, \mathrm{SOL}(f, g))^{\alpha} & \le & \|\frak{m}(x)\| \quad \textrm{ for all } \quad x \in K, \label{PT21}
\end{eqnarray}
where $\alpha := \mathscr{R}(3n - 1, d + 1).$
\end{theorem}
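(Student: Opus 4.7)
The plan is to localize the problem by decomposing $\mathbb{R}^n$ into closed semi-algebraic regions on which the nonsmooth map $\frak{m}$ agrees with a fixed choice of components of $f$ and $g$, and to decompose $\mathrm{SOL}(f,g)$ correspondingly. For each index set $I\subset\{1,\dots,n\}$, set
$$K_I := \{x\in\mathbb{R}^n : f_i(x)\le g_i(x)\ \text{for } i\in I,\ f_i(x)\ge g_i(x)\ \text{for } i\notin I\},$$
$$S_I := \{x\in\mathbb{R}^n : f_i(x)=0,\ g_i(x)\ge 0\ \text{for } i\in I;\ g_i(x)=0,\ f_i(x)\ge 0\ \text{for } i\notin I\}.$$
Then $\mathbb{R}^n=\bigcup_I K_I$, $\mathrm{SOL}(f,g)=\bigcup_I S_I$, and on $K_I$ one has $\frak{m}_i(x)=f_i(x)$ for $i\in I$ and $\frak{m}_i(x)=g_i(x)$ for $i\notin I$.

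Next, I would apply Lemma~\ref{Lemma21} to each nonempty $S_I$, which is cut out in $\mathbb{R}^n$ by $n$ polynomial equalities and $n$ polynomial inequalities of degree at most $d$. Since the ambient dimension is $n$, this furnishes a constant $c_I>0$ such that
\begin{equation*}
c_I\,\mathrm{dist}(x,S_I)^{\mathscr{R}(3n-1,d+1)}\ \le\ \sum_{i\in I}|f_i(x)|+\sum_{i\notin I}|g_i(x)|+\sum_{i\in I}[-g_i(x)]_+ +\sum_{i\notin I}[-f_i(x)]_+
\end{equation*}
for all $x\in K$; note that $\mathscr{R}(n+n+n-1,d+1)$ is exactly the exponent $\alpha$ claimed in the theorem. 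The decisive pointwise estimate is now that on the region $K_I$ one has $|f_i(x)|=|\frak{m}_i(x)|$ for $i\in I$ and $|g_i(x)|=|\frak{m}_i(x)|$ for $i\notin I$, while the defining inequalities $g_i(x)\ge f_i(x)$ (for $i\in I$) and $f_i(x)\ge g_i(x)$ (for $i\notin I$) force $[-g_i(x)]_+\le|\frak{m}_i(x)|$ and $[-f_i(x)]_+\le|\frak{m}_i(x)|$ respectively. Hence the right-hand side above is majorized by $2\sqrt{n}\,\|\frak{m}(x)\|$, and since $S_I\subseteq\mathrm{SOL}(f,g)$ gives $\mathrm{dist}(x,\mathrm{SOL}(f,g))\le\mathrm{dist}(x,S_I)$, the theorem follows on $K\cap K_I$.

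It remains to handle those $I$ for which $S_I=\emptyset$, where Lemma~\ref{Lemma21} does not apply. In that case the nonnegative continuous function
\begin{equation*}
\psi_I(x):=\sum_{i\in I}f_i(x)^2+\sum_{i\notin I}g_i(x)^2+\sum_{i\in I}[-g_i(x)]_+^2+\sum_{i\notin I}[-f_i(x)]_+^2
\end{equation*}
vanishes exactly on $S_I$ and hence has a strictly positive infimum on the compact set $K$; the same pointwise estimate yields $\psi_I(x)\le 2\|\frak{m}(x)\|^2$ on $K_I$, so $\|\frak{m}(x)\|$ is uniformly bounded below by a positive constant on $K\cap K_I$, and since $\mathrm{dist}(x,\mathrm{SOL}(f,g))$ is bounded above on $K$ (or equals $1$ by convention if $\mathrm{SOL}(f,g)=\emptyset$), the required inequality holds trivially there. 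Taking the minimum of the $2^n$ constants produced as $I$ ranges over the subsets of $\{1,\dots,n\}$ yields a uniform constant $c>0$ on $K$. The chief technical point, and the reason the exponent drops from the naive $\mathscr{R}(3n,2d+1)$ to $\mathscr{R}(3n-1,d+1)$, is precisely this region-by-region bookkeeping, which trades the single PCP system of $2n+1$ constraints of maximum degree $2d$ for $2^n$ smaller systems of $2n$ constraints of degree at most $d$.
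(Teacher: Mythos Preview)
Your proof is correct and follows essentially the same approach as the paper's: both decompose according to the $2^n$ index sets $I\subset\{1,\dots,n\}$, apply Lemma~\ref{Lemma21} to each system $S_I$ (which is exactly the paper's $\Phi_I^{-1}(0)$) to obtain the exponent $\mathscr{R}(3n-1,d+1)$, and then bound the residual $\Phi_I(x)=\sum_{i\in I}(|f_i|+[-g_i]_+)+\sum_{i\notin I}([-f_i]_+ + |g_i|)$ by $2\sqrt{n}\,\|\frak m(x)\|$. The only organizational difference is that you fix $I$ by the region $K_I$ containing $x$ and verify the pointwise bound there directly, whereas the paper first proves the scalar inequality $\min\{|a|+[-b]_+,\,[-a]_+ + |b|\}\le 2|\min\{a,b\}|$ and deduces $\min_I\Phi_I(x)\le 2\sqrt{n}\,\|\frak m(x)\|$ globally; since $x\in K_I$ forces $I$ to be a minimizer of $J\mapsto\Phi_J(x)$, the two arguments are equivalent.
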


The proof of this theorem is done in several steps. We start with the following simple lemma.

\begin{lemma} \label{Lemma11}
For any real numbers $a, b,$ the following inequality holds
\begin{eqnarray*}
\min\{|a| + [-b]_+, [-a]_+ + |b|\} & \le & 2 | \min\{a, b\} |.
\end{eqnarray*}
\end{lemma}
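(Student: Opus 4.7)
The plan is to exploit symmetry and reduce to a single case. Observe that the right-hand side $2|\min\{a,b\}|$ is symmetric in $a$ and $b$, and the left-hand side is symmetric as well, since swapping $a$ and $b$ merely interchanges the two arguments of the outer $\min$. I would therefore assume, without loss of generality, that $a \le b$, so that $\min\{a,b\} = a$ and $|\min\{a,b\}| = |a|$.

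Under this assumption, two elementary observations suffice. First, $a \le b$ implies $-a \ge -b$, and taking positive parts preserves this, so $[-b]_+ \le [-a]_+$. Second, $[-a]_+ \le |a|$ holds for every real number $a$, since $[-a]_+$ equals either $0$ or $-a = |a|$. Chaining these gives $[-b]_+ \le |a|$, and therefore
\begin{equation*}
|a| + [-b]_+ \ \le \ |a| + |a| \ = \ 2|a| \ = \ 2|\min\{a,b\}|.
\end{equation*}
Since $|a| + [-b]_+$ is one of the two quantities appearing in the outer $\min$ on the left-hand side, the outer $\min$ is bounded above by this expression, which completes the argument.

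There is no serious obstacle here: the entire proof rests on recognizing the symmetry and applying the bound $[-b]_+ \le |a|$ coming from $a \le b$. No case analysis on the individual signs of $a$ and $b$ is actually required, so the proof reduces to a two-line calculation once the normalization $a \le b$ has been made.
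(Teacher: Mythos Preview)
Your proof is correct and is, if anything, cleaner than the paper's. Both arguments begin with a symmetry reduction, but different ones: the paper assumes without loss of generality that $|a| + [-b]_+ \le [-a]_+ + |b|$ (i.e., normalizes by which of the two expressions in the outer $\min$ is smaller), and then carries out a case analysis on the signs of $a$ and $b$ (three cases: $a>0$; $a\le 0$ and $b\ge 0$; $a\le 0$ and $b<0$). You instead normalize by assuming $a \le b$, which lets you identify $|\min\{a,b\}| = |a|$ directly, and then the single chain $[-b]_+ \le [-a]_+ \le |a|$ finishes the job without any sign cases. Your route exploits the monotonicity of $t \mapsto [t]_+$ and the trivial bound $[-a]_+ \le |a|$, whereas the paper's route verifies the inequality piecewise; both are entirely elementary, but yours is shorter and requires no branching.
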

\begin{proof}
By interchange of $a$ and $b,$ we may assume that
\begin{eqnarray*}
|a| + [-b]_+ &\le& [-a]_+ + |b|.
\end{eqnarray*}
There are two cases to be considered.

\subsubsection*{Case 1: $a > 0$}
In this case, we have from the above inequality that
\begin{eqnarray*}
0 \ < \ a + [-b]_+ \ = \ |a| + [-b]_+ &\le& [-a]_+ + |b| \ = \ |b|.
\end{eqnarray*}
Then it is easy to see that $a \le b.$ Consequently, we have
\begin{eqnarray*}
|a| + [-b]_+ &=& a \ = \ |\min\{a, b\}| \ < \ 2 |\min\{a, b\}|.
\end{eqnarray*}

\subsubsection*{Case 2: $a \le 0$}
If $b \ge 0,$ then
\begin{eqnarray*}
|a| + [-b]_+ &=& -a \ = - \min\{a, b\} \ = \ |\min\{a, b\}| \ \le \ 2 |\min\{a, b\}|.
\end{eqnarray*}
Finally, assume that $b < 0.$ In this case, we have
\begin{eqnarray*}
|a| + [-b]_+ &=& -a - b \le - 2\min\{a, b\} \ = \ 2 |\min\{a, b\}|.
\end{eqnarray*}
The lemma is proved.
\end{proof}

For each (possibly empty) set $I \subset \{1, \ldots, n\},$ we define the function $\Phi_I \colon \mathbb{R}^n \to \mathbb{R}$ by
\begin{eqnarray*}
\Phi_I(x) &:=& \sum_{i \in I} (|f_i(x)| + [-g_i(x)]_+) + \sum_{i \not \in I} ([-f_i(x)]_+ + |g_i(x)|).
\end{eqnarray*}

\begin{lemma} \label{BD2}
For all $x \in \mathbb{R}^n,$ the following inequality holds
\begin{eqnarray*}
\min_I \Phi(x) & \le & 2 \sqrt{n} \|\frak{m}(x)\|.
\end{eqnarray*}
\end{lemma}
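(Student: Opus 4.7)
The plan is to reduce to the one-dimensional inequality already established in Lemma~\ref{Lemma11} by constructing the index set $I$ greedily, component by component, and then converting the resulting $\ell^1$ bound to an $\ell^2$ bound via Cauchy--Schwarz.

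More precisely, for a fixed $x \in \mathbb{R}^n,$ I would define
\begin{eqnarray*}
I(x) &:=& \{ i \in \{1, \ldots, n\} \ : \ |f_i(x)| + [-g_i(x)]_+ \ \le \ [-f_i(x)]_+ + |g_i(x)| \}.
\end{eqnarray*}
For every $i \in I(x),$ applying Lemma~\ref{Lemma11} with $a = f_i(x)$ and $b = g_i(x)$ yields
$|f_i(x)| + [-g_i(x)]_+ \le 2|\min\{f_i(x), g_i(x)\}| = 2 |\mathfrak{m}_i(x)|,$ since the minimum on the left-hand side of Lemma~\ref{Lemma11} is realized by the first term. Symmetrically, for $i \notin I(x),$ the minimum is realized by the second term, so $[-f_i(x)]_+ + |g_i(x)| \le 2 |\mathfrak{m}_i(x)|.$

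Summing these $n$ inequalities produces
\begin{eqnarray*}
\Phi_{I(x)}(x) &\le& 2 \sum_{i = 1}^n |\mathfrak{m}_i(x)| \ = \ 2\, \|\mathfrak{m}(x)\|_1 \ \le \ 2\sqrt{n}\, \|\mathfrak{m}(x)\|,
\end{eqnarray*}
where the final step is the standard Cauchy--Schwarz bound $\|\cdot\|_1 \le \sqrt{n}\, \|\cdot\|.$ Since $\min_I \Phi_I(x) \le \Phi_{I(x)}(x),$ this gives the stated inequality.

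There is no real obstacle; the only small subtlety is verifying that the definition of $I(x)$ is exactly the condition making Lemma~\ref{Lemma11} usable componentwise in the correct direction, but this is immediate from the proof structure of Lemma~\ref{Lemma11} (which handles the two possible orderings of $|a| + [-b]_+$ and $[-a]_+ + |b|$ symmetrically via interchange of $a$ and $b$).
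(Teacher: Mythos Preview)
Your proof is correct and follows essentially the same approach as the paper: componentwise application of Lemma~\ref{Lemma11} followed by the Cauchy--Schwarz bound $\|\cdot\|_1 \le \sqrt{n}\,\|\cdot\|$. The only cosmetic difference is that the paper starts from a minimizing index set $I'$ and then verifies (by a one-line contradiction with minimality) that it satisfies your defining inequality, whereas you construct the witness set $I(x)$ directly; both routes arrive at the identical estimate.
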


\begin{proof}
Take any $x \in \mathbb{R}^n$ and let $I' \subset \{1, \ldots, n\}$ be an index set such that 
$$\Phi_{I'}(x) := \min_I \Phi_I(x).$$ 
We have
\begin{eqnarray*}
|f_i(x)| + [-g_i(x)]_+ &\le& [-f_i(x)]_+ + |g_i(x)|  \quad \textrm{ for all } \quad i \in I'.
\end{eqnarray*}
Indeed, if this fails to hold at some index $i \in I',$ then, with $I'' := I' \setminus \{i\},$ we would have 
\begin{eqnarray*}
\min_I \Phi_I(x) \ = \ \Phi_{I'}(x) &>& \Phi_{I''}(x) \ \ge \ \min_I \Phi_I(x),
\end{eqnarray*}
which is a contradiction. Similarly, we also have
\begin{eqnarray*}
[-f_i(x)]_+ + |g_i(x)| &\le& |f_i(x)| + [-g_i(x)]_+  \quad \textrm{ for all } \quad i \not \in I'.
\end{eqnarray*}

By Lemma~\ref{Lemma11}, therefore
\begin{eqnarray*}
\Phi_{I'}(x) 
&=&  \sum_{i \in I'} (|f_i(x)| + [-g_i(x)]_+) + \sum_{i \not \in I'} ([-f_i(x)]_+ + |g_i(x)|) \\
&\le& \sum_{i \in I'} 2 |\min\{f_i(x), g_i(x)\}| +  \sum_{i \not \in I'} 2 |\min\{f_i(x), g_i(x)\}|  \\
&=& 2 \sum_{i = 1}^n |\min\{f_i(x), g_i(x)\}|  \\
&\le& 2 \sqrt{n} \|\frak{m}(x)\|,
\end{eqnarray*}
which proves the desired inequality.
\end{proof}

\begin{remark}{\rm
Analysis similar to that in the proof of Lemma~\ref{BD2} shows that 
\begin{eqnarray*}
\|\frak{m}(x)\|  &\le& \min_I \Phi_I(x) \quad \textrm{ for all } \quad x \in \mathbb{R}^n.
\end{eqnarray*}
As we shall not use this inequality, we leave the proof as an exercise.
}\end{remark}

The next lemma is an intermediate step toward the desired error bound in Theorem~\ref{Theorem11}.

\begin{lemma} \label{BD3}
For any compact set $K \subset \mathbb{R}^n,$ there exists a constant $c > 0$ such that
\begin{eqnarray*}
c\, \mathrm{dist}(x, \mathrm{SOL}(f, g))^{\alpha} & \le & \min_I \Phi(x) \quad \textrm{ for all } \quad x \in K,
\end{eqnarray*}
where $\alpha := \mathscr{R}(3n - 1, d + 1).$
\end{lemma}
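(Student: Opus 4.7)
The plan is to partition $\mathrm{SOL}(f,g)$ according to the ``active pattern'' of the complementarity condition and to apply Lemma~\ref{Lemma21} to each piece. For an index set $I \subset \{1, \ldots, n\}$, set
\begin{equation*}
S_I := \{x \in \mathbb{R}^n : f_i(x) = 0,\ g_i(x) \ge 0 \text{ for } i \in I;\ g_j(x) = 0,\ f_j(x) \ge 0 \text{ for } j \notin I\}.
\end{equation*}
Since $f(x), g(x) \ge 0$ together with $\langle f(x), g(x)\rangle = 0$ forces $f_i(x)g_i(x) = 0$ for every $i$, one checks at once that $\mathrm{SOL}(f,g) = \bigcup_I S_I$ and that $\Phi_I(x) = 0$ precisely when $x \in S_I$; thus $\Phi_I$ is exactly the residual of the polynomial system cutting out $S_I$.

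Next, for each $I$ with $S_I \ne \emptyset$ I would apply Lemma~\ref{Lemma21} to this system, which consists of $n$ polynomial equalities ($f_i = 0$ for $i \in I$ and $g_j = 0$ for $j \notin I$) and $n$ polynomial inequalities ($-g_i \le 0$ for $i \in I$ and $-f_j \le 0$ for $j \notin I$), all of degree at most $d$. Taking $l = m = n$ in Lemma~\ref{Lemma21}, the exponent collapses to $\mathscr{R}(n + l + m - 1,\, d+1) = \mathscr{R}(3n - 1,\, d+1) = \alpha$, independently of $I$; this yields a constant $c_I > 0$ with
\begin{equation*}
c_I\, \mathrm{dist}(x, S_I)^{\alpha} \ \le\ \Phi_I(x) \quad \text{for all } x \in K.
\end{equation*}

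To assemble the pieces, fix $x \in K$ and pick $I^{*}$ attaining $\min_I \Phi_I(x)$. If $S_{I^{*}} \ne \emptyset$ I would use $\mathrm{dist}(x, \mathrm{SOL}(f,g)) \le \mathrm{dist}(x, S_{I^{*}})$ and the inequality above transfers directly, with the correct exponent $\alpha$. The only remaining case is when $S_{I^{*}} = \emptyset$. For such $I$, the continuous function $\Phi_I$ has no zero on $\mathbb{R}^n$, so compactness of $K$ gives a constant $c_I > 0$ with $\Phi_I \ge c_I$ on $K$. Combined with the fact that $\mathrm{dist}(\cdot, \mathrm{SOL}(f,g))$ is uniformly bounded on $K$ (finite, or equal to $1$ by convention when $\mathrm{SOL}(f,g) = \emptyset$), a trivial linear bound handles this case. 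Taking $c$ to be the minimum (over finitely many $I$) of the $c_I$'s and the constants arising from these trivial bounds produces the required $c > 0$.

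The main obstacle I foresee is purely bookkeeping: one must verify that the count of equalities and inequalities defining $S_I$ is the same for every $I$, namely $n$ each and hence independent of $|I|$, so that a single exponent $\alpha = \mathscr{R}(3n-1, d+1)$ works uniformly across the partition; and one must treat the empty-$S_I$ patterns separately so that they do not spoil the error bound. Neither difficulty is conceptually deep, but both need to be executed cleanly to justify that the exponent and the constant are genuinely uniform in $I$.
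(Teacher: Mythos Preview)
Your proposal is correct and follows essentially the same route as the paper: decompose $\mathrm{SOL}(f,g)$ into the pieces $S_I = \Phi_I^{-1}(0)$, apply Lemma~\ref{Lemma21} to each piece with $l=m=n$ to get the uniform exponent $\mathscr{R}(3n-1,d+1)$, and handle the empty $S_I$'s via compactness of $K$. The only cosmetic difference is in the assembly step: the paper uses the convention $\mathrm{dist}(x,\emptyset)=1$ together with a uniform bound $M\ge 1$ on $\mathrm{dist}(x,\mathrm{SOL}(f,g))$ over $K$ to write a single chain of inequalities covering all $I$ at once, whereas you branch on whether the minimizing index set $I^{*}=I^{*}(x)$ has $S_{I^{*}}$ empty or not and then take the minimum of the finitely many resulting constants; both arguments are equivalent.
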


\begin{proof}
We first assume that $\mathrm{SOL}(f, g) = \emptyset.$ By convention, $\mathrm{dist}(x, \mathrm{SOL}(f, g)) = 1;$ furthermore, by definition, we have
$$\min_I \Phi(x) \ne 0 \quad \textrm{ for all } \quad x \in \mathbb{R}^n.$$
Therefore, the desired conclusion holds with the constant $c := \min_{x \in K} \min_I \Phi(x) > 0.$ 

Now assume that the solution set $\mathrm{SOL}(f, g)$ is not empty. Recall that, for each (possibly empty) set $I \subset \{1, \ldots, n\},$ the function $\Phi_I \colon \mathbb{R}^n \to \mathbb{R}^n$ is defined by
$$\Phi_I(x) := \sum_{i \in I} (|f_i(x)| + [-g_i(x)]_+) + \sum_{i \not \in I} ([-f_i(x)]_+ + |g_i(x)|).$$
By definition, $\Phi_I$ is nonnegative on $\mathbb{R}^n$ and, furthermore, a point $x$ belongs to the zero set $\Phi_I^{-1}(0)$ if, and only if, it satisfies the following constraints 
$$f_i(x) = 0, g_i(x) \ge 0 \textrm{ for } i \in I \quad \textrm{ and } \quad  f_i(x) \ge 0, g_i(x)  = 0 \textrm{ for } i \not \in I.$$
Note that this is a polynomial system with $n$ equalities and $n$ inequalities and with the maximum degree $d.$ 
By Lemma~\ref{Lemma21}, we may find a constant $c_I > 0$ satisfying the following error bound
\begin{eqnarray} \label{PT4}
c_I\, \mathrm{dist}(x, \Phi_I^{-1}(0))^{\alpha} & \le & \Phi_I(x) \quad \textrm{ for all } \quad x \in K.
\end{eqnarray}
Note that, since $K$ is a compact set, the error bound \eqref{PT4} holds even when $\Phi_I^{-1}(0)$ is an empty set.

On the other hand, we have $\mathrm{SOL}(f, g) = \cup_{I \in \mathscr{I}} \Phi_I^{-1}(0),$ where $\mathscr{I}$ denotes the family of all subsets $I$ of $\{1, \ldots, n\}$ for which the zero set $\Phi_I^{-1}(0)$ is not empty. Then it is easily seen that 
\begin{eqnarray*}
\mathrm{dist}(x, \mathrm{SOL}(f, g)) &=& \min_{I \in \mathscr{I}} \mathrm{dist}(x, \Phi_I^{-1}(0)) \quad \textrm{ for all } \quad x \in \mathbb{R}^n.
\end{eqnarray*}
Furthermore, since the function $\mathbb{R}^n \rightarrow \mathbb{R}, x \mapsto \mathrm{dist}(x, \mathrm{SOL}(f, g)),$ is continuous and the set $K$ is compact, there exists a constant $M \ge 1$ such that
\begin{eqnarray*}
\mathrm{dist}(x, \mathrm{SOL}(f, g)) &\le& M  \quad \textrm{ for all } \quad x \in K.
\end{eqnarray*}
It follows that
\begin{eqnarray*}
\mathrm{dist}(x, \mathrm{SOL}(f, g)) &\le& M\, \min_{I \not \in \mathscr{I}} \mathrm{dist}(x, \Phi_I^{-1}(0)) \quad \textrm{ for all } \quad x \in K,
\end{eqnarray*}
because, by convention, we set $\mathrm{dist}(x,\emptyset) = 1.$ Therefore
\begin{eqnarray*}
\mathrm{dist}(x, \mathrm{SOL}(f, g)) &\le& M \min_{I} \mathrm{dist}(x, \Phi_I^{-1}(0)) \quad \textrm{ for all } \quad x \in K,
\end{eqnarray*}
where the minimum is taken over all subsets $I$ of $\{1, \ldots, n\}.$ Letting $c := \min_I \frac{c_I}{M^\alpha} > 0,$ we get for all $x \in K,$
\begin{eqnarray*}
c\, \mathrm{dist}(x, \mathrm{SOL}(f, g))^\alpha 
&\le& c\, M^\alpha \min_{I} \mathrm{dist}(x, \Phi_I^{-1}(0))^\alpha \\
&=& M^\alpha \min_{I} c\, \mathrm{dist}(x, \Phi_I^{-1}(0))^\alpha \\
&\le& M^\alpha \min_{I} \frac{c_I}{M^\alpha} \, \mathrm{dist}(x, \Phi_I^{-1}(0))^\alpha \\
&=& \min_{I} {c_I} \, \mathrm{dist}(x, \Phi_I^{-1}(0))^\alpha \\
&\le& \min_I \Phi_I(x),
\end{eqnarray*}
where the last inequality follows from \eqref{PT4}.
\end{proof}

\begin{proof}[Proof of Theorem~\ref{Theorem11}]
This is an immediate consequence of Lemmas~\ref{BD2} and \ref{BD3}.
\end{proof}

The following example indicates that in general the error bound~\eqref{PT21} cannot hold globally for all  $x \in \mathbb{R}^n.$
\begin{example}{\rm
Consider the problem $\mathrm{PCP}(f, g)$ with
$$f(x,y) = g(x, y) := (y - 1, xy - 1) \quad \textrm{ for } \quad (x, y) \in \mathbb{R}^2.$$ 
It is easily seen that $\mathrm{SOL}(f, g) = \{(1, 1)\}.$ Consider the sequence $z^k := (k, \frac{1}{k})$ for $k \ge 1.$ 
As $k \to +\infty,$ we have 
\begin{eqnarray*}
\frak{m}(z^k) &=& \left(\frac{1}{k} - 1, 0 \right) \ \to \ (-1, 0), \\
\mathrm{dist}(z^k, \mathrm{SOL}(f, g) ) &=& \sqrt{(k - 1)^2 + \left(\frac{1}{k} - 1\right)^2} \ \to \ + \infty.
\end{eqnarray*}
It turns out that there cannot exist any positive scalars $c$ and $\alpha$ such that
\begin{eqnarray*}
c\, \mathrm{dist}(z^k, \mathrm{SOL}(f, g))^{\alpha}  & \le & \|\frak{m}(z^k)\|
\end{eqnarray*}
for all $k$ sufficiently large. Thus, a global error bound with the natural map $\frak{m},$ even raised to any positive power, cannot hold in this case.
}\end{example}

The next result shows that for the $\mathrm{PCP}(f, g),$ where $f, g$ are affine maps, the validity of a Lipschitzian error bound for the solution set $\mathrm{SOL}(f, g)$ over compact sets in terms of the natural map $\mathfrak{m}$ can be completely characterized. This is possible because, in this case, the solution set $\mathrm{SOL}(f, g)$ can be described as the solution set of a finite number of linear equalities and inequalities and so the well-known Hoffman's error bound analysis is applicable. 

\begin{theorem} \label{Theorem12}
Assume $f, g \colon \mathbb{R}^n \to \mathbb{R}^n$ are affine maps. For any compact set $K \subset \mathbb{R}^n,$ there exists a constant $c > 0$ such that
\begin{eqnarray*}
c\, \mathrm{dist}(x, \mathrm{SOL}(f, g)) & \le & \|\frak{m}(x)\| \quad \textrm{ for all } \quad x \in K.
\end{eqnarray*}
\end{theorem}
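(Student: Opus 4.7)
The plan is to exploit the fact that, when $f$ and $g$ are affine, every building block $\Phi_I^{-1}(0)$ from the proof of Lemma~\ref{BD3} is a polyhedron: it is defined by the finitely many linear equalities $f_i(x)=0$ for $i\in I$ and $g_i(x)=0$ for $i\notin I$, together with the linear inequalities $g_i(x)\ge 0$ for $i\in I$ and $f_i(x)\ge 0$ for $i\notin I$. Consequently, one can replace the H\"olderian bound of Lemma~\ref{Lemma21} by Hoffman's classical (global, Lipschitzian) error bound for polyhedral sets. This yields, for each $I\subset\{1,\ldots,n\}$ with $\Phi_I^{-1}(0)\ne\emptyset$, a constant $c_I>0$ such that
\begin{equation*}
c_I\,\mathrm{dist}(x,\Phi_I^{-1}(0)) \ \le \ \Phi_I(x) \quad \text{for all } x\in\mathbb{R}^n,
\end{equation*}
with exponent $1$ instead of $\mathscr{R}(3n-1,d+1)$.

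Next I would repeat the patching argument from Lemma~\ref{BD3} verbatim, but with $\alpha=1$. If $\mathrm{SOL}(f,g)=\emptyset$, then $\min_I\Phi_I$ is strictly positive and continuous on the compact set $K$, so the inequality follows from the convention $\mathrm{dist}(x,\emptyset)=1$ by taking $c:=\min_{x\in K}\min_I\Phi_I(x)/(2\sqrt{n})$. Otherwise, letting $\mathscr{I}$ be the family of $I$ with $\Phi_I^{-1}(0)\ne\emptyset$ and using that $K$ is compact to pick $M\ge 1$ with $\mathrm{dist}(x,\mathrm{SOL}(f,g))\le M$ on $K$, we obtain
\begin{equation*}
\mathrm{dist}(x,\mathrm{SOL}(f,g))=\min_{I\in\mathscr{I}}\mathrm{dist}(x,\Phi_I^{-1}(0))\le M\min_{I}\mathrm{dist}(x,\Phi_I^{-1}(0)) \quad \text{for all } x\in K,
\end{equation*}
since any $I\notin\mathscr{I}$ contributes $\mathrm{dist}(x,\emptyset)=1\ge \mathrm{dist}(x,\mathrm{SOL}(f,g))/M$. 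Combining with the Hoffman bounds above gives $c'\,\mathrm{dist}(x,\mathrm{SOL}(f,g))\le\min_I\Phi_I(x)$ on $K$ with $c':=\min_I c_I/M>0$.

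Finally, I would invoke Lemma~\ref{BD2}, which supplies the inequality $\min_I\Phi_I(x)\le 2\sqrt{n}\,\|\mathfrak{m}(x)\|$ for every $x\in\mathbb{R}^n$. Taking $c:=c'/(2\sqrt{n})$ concludes the proof. There is no serious obstacle: the polyhedral (affine) hypothesis is doing all the work, by upgrading the semialgebraic H\"olderian bound underlying Lemma~\ref{BD3} to a Lipschitzian Hoffman bound; the rest is a line-by-line rerun of the proof of Lemma~\ref{BD3} with exponent $\alpha=1$.
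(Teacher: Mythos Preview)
Your approach is essentially identical to the paper's: replace Lemma~\ref{Lemma21} by Hoffman's error bound for polyhedra, rerun the patching argument of Lemma~\ref{BD3} with exponent $\alpha=1$, and then invoke Lemma~\ref{BD2}.

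There is one small slip in the patching step. You only define $c_I$ via Hoffman for $I\in\mathscr{I}$, yet you then claim $c'\,\mathrm{dist}(x,\mathrm{SOL}(f,g))\le\min_I\Phi_I(x)$ with the minimum over \emph{all} $I\subset\{1,\dots,n\}$. If that overall minimum is attained at some $I\notin\mathscr{I}$, your chain does not reach $\Phi_I(x)$: from $\mathrm{dist}(x,\mathrm{SOL}(f,g))\le M\min_I\mathrm{dist}(x,\Phi_I^{-1}(0))$ you only get $\le M\cdot 1=M$ at such an $I$, and there is no Hoffman inequality available to convert this into a bound by $\Phi_I(x)$. Since Lemma~\ref{BD2} controls $\min_I\Phi_I$ over all $I$ (not just over $\mathscr{I}$), the argument does not close as written. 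The fix is exactly what the paper does: for $I\notin\mathscr{I}$ one still has $c_I\,\mathrm{dist}(x,\Phi_I^{-1}(0))\le\Phi_I(x)$ on $K$ with $c_I:=\min_{x\in K}\Phi_I(x)>0$, because $\Phi_I$ is continuous and strictly positive while $\mathrm{dist}(x,\emptyset)=1$. With $c_I$ defined for every $I$, your argument goes through verbatim.
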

\begin{proof}
By assumption, for each index set $I \subset \{1, \ldots, n\},$ the zero set $\Phi_I^{-1}(0)$ is given by a system of linear equalities and inequalities. By Hoffman's error bound for polyhedra \cite{Hoffman1952}, if $\Phi_I^{-1}(0)$ is not empty then we can find a constant $c_I > 0$ such that
\begin{eqnarray*}
c_I\, \mathrm{dist}(x, \Phi_I^{-1}(0)) & \le & \Phi_I(x) \quad \textrm{ for all } \quad x \in K.
\end{eqnarray*}
Since $K$ is a compact set, it is easy to see that this fact also holds when $\Phi_I^{-1}(0)$ is an empty set. 
The rest of the proof is similar to the one of Theorem~\ref{Theorem11} given above. The details are left to the reader.
\end{proof}

\begin{remark}{\rm
At this point we would like to mention that Theorem~\ref{Theorem12} can also be deduced from a result of Robinson \cite{Robinson1981} (see also \cite{Luo1992, Luo1994}) on a locally upper Lipschitzian property of polyhedral multifunctions\footnote{A multifunction is {\em polyhedral} if its graph is the union of finitely many polyhedral convex sets.}. More precisely, when $f, g \colon \mathbb{R}^n \to \mathbb{R}^n$ are affine maps, the inverse of the natural map $\frak{m}$ is a polyhedral multifunction and thus, by Robinson's result
\cite[Proposition~1]{Robinson1981}, is locally upper Lipschitzian at the origin, that is, there exist scalars $c > 0$ and $\epsilon > 0$ such that
\begin{eqnarray*}
\frak{m}^{-1}(y) & \subset & \frak{m}^{-1}(0) + c \|y\| \mathbb{B},
\end{eqnarray*}
for all $y \in \mathbb{R}^n$ with $\|y\| \le \epsilon,$ where $\mathbb{B}$ denotes the unit closed ball in $\mathbb{R}^n.$ This statement implies easily Theorem~\ref{Theorem12}.
}\end{remark}

The next result, which is inspired by the works of Gowda \cite{Gowda1996}, Mangasarian and Ren \cite{Mangasarian1994}, and Facchinei and Pang \cite[Theorem~6.3.12]{Facchinei2003}, provides a global H\"olderian error bound in terms of the natural map for a broad class of PCPs.

\begin{theorem} \label{Theorem13}
Let $f, g \colon \mathbb{R}^n \to \mathbb{R}^n$ be polynomial maps of degree at most $d \ge 1.$ If $\mathrm{SOL}(f^\infty, g^\infty) = \{0\}$ then there exists a constant $c > 0$ such that
\begin{eqnarray*}
c\, \min\{\mathrm{dist}(x, \mathrm{SOL}(f, g)), \mathrm{dist}(x, \mathrm{SOL}(f, g))^{\alpha}\} & \le & \|\frak{m}(x)\| \quad \textrm{ for all } \quad x \in \mathbb{R}^n,
\end{eqnarray*}
where we put
$$\alpha := 
\begin{cases}
\mathscr{R}(3n - 1, d + 1) & \textrm{ if } d > 1,\\
1 & \textrm{ otherwise.}
\end{cases}$$
\end{theorem}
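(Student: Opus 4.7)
The plan is to combine the Hölderian bound from Theorem~\ref{Theorem11} (or the Lipschitzian bound from Theorem~\ref{Theorem12} when $d = 1$), valid on any compact set, with a linear growth estimate $\|\frak{m}(x)\| \ge c\|x\|$ valid outside a large ball. The hypothesis $\mathrm{SOL}(f^\infty, g^\infty) = \{0\}$ is used twice: first, via Proposition~\ref{MD2}, to guarantee that $\mathrm{SOL}(f, g)$ is compact (possibly empty), so that one can choose $R > 0$ with $\mathrm{SOL}(f, g) \subset B_R$; second, to prove the growth estimate itself.

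The key lemma is the growth estimate: there exist $c > 0$ and $R > 0$ such that $\|\frak{m}(x)\| \ge c\|x\|$ whenever $\|x\| \ge R$. I would argue by contradiction in the style of the proof of Proposition~\ref{MD2}. Suppose there is a sequence $\{x^k\}$ with $t_k := \|x^k\| \to +\infty$ and $\|\frak{m}(x^k)\|/t_k \to 0$, and pass to a subsequence so that $u^k := x^k/t_k \to u^*$ with $\|u^*\| = 1$. Componentwise, $|\frak{m}_i(x^k)| = o(t_k)$ gives both $f_i(x^k), g_i(x^k) \ge -o(t_k)$ and $\min\{f_i(x^k), g_i(x^k)\} \le o(t_k)$; after extracting a further subsequence, I may assume the argmin is always either $f_i$ or $g_i$. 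Dividing the $f$-inequalities by $t_k^{d_f}$, the $g$-inequalities by $t_k^{d_g}$, and letting $k \to \infty$ yields $f^\infty_i(u^*) \ge 0$, $g^\infty_i(u^*) \ge 0$, and, from the identified argmin, $f^\infty_i(u^*)\, g^\infty_i(u^*) = 0$. Hence $u^* \in \mathrm{SOL}(f^\infty, g^\infty) = \{0\}$, contradicting $\|u^*\| = 1$.

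Once the growth estimate is established, I enlarge $R$ if necessary so that also $\mathrm{SOL}(f, g) \subset B_R$. For $\|x\| \ge R$, the bound $\mathrm{dist}(x, \mathrm{SOL}(f, g)) \le \|x\| + R \le 2\|x\|$ converts the growth estimate to $\|\frak{m}(x)\| \ge (c/2)\, \mathrm{dist}(x, \mathrm{SOL}(f, g))$. For $\|x\| \le R$, Theorem~\ref{Theorem11} (respectively Theorem~\ref{Theorem12} when $d = 1$), applied to the compact set $\overline{B_R}$, gives $\|\frak{m}(x)\| \ge c'\, \mathrm{dist}(x, \mathrm{SOL}(f, g))^{\alpha}$ with the desired exponent. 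Since both $\mathrm{dist}$ and $\mathrm{dist}^{\alpha}$ dominate $\min\{\mathrm{dist}, \mathrm{dist}^{\alpha}\}$, these two estimates combine to give the claim with constant $\min(c/2, c')$. The degenerate case $\mathrm{SOL}(f, g) = \emptyset$ is handled by the convention $\mathrm{dist}(x, \emptyset) = 1$: the growth estimate bounds $\|\frak{m}\|$ below by $cR$ outside $B_R$, while continuity plus compactness bound it below by a positive constant on $\overline{B_R}$, so $\inf_{\mathbb{R}^n} \|\frak{m}\| > 0$.

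The main obstacle is the growth estimate in the case $d_f \ne d_g$: the rescalings $t_k^{d_f}$ and $t_k^{d_g}$ are of different orders, so one must carefully divide each $f$-inequality by $t_k^{d_f}$ and each $g$-inequality by $t_k^{d_g}$ and verify that the limits retain the correct sign. Once this bookkeeping is done, the nonnegativity of $f^\infty_i(u^*), g^\infty_i(u^*)$ together with $f^\infty_i(u^*)\, g^\infty_i(u^*) = 0$ follows uniformly in $i$, and the contradiction with $\mathrm{SOL}(f^\infty, g^\infty) = \{0\}$ closes the argument.
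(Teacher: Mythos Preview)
Your proposal is correct and follows essentially the same route as the paper: the paper isolates the growth estimate as Lemma~\ref{BD4} (proved by the same contradiction/normalization argument you sketch), then combines it with Proposition~\ref{MD2} and Theorems~\ref{Theorem11}--\ref{Theorem12} exactly as you describe, splitting into $\|x\|\ge R$ and $\|x\|\le R$ and taking $c=\min\{c_1/2,c_2\}$. Your derivation of $f_i^\infty(u^*)\ge 0$ and $g_i^\infty(u^*)\ge 0$ directly from $f_i(x^k),g_i(x^k)\ge \frak{m}_i(x^k)\ge -o(t_k)$ is in fact a bit cleaner than the paper's case split, but the overall strategy is identical.
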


In order to prove this theorem we need the following lemma.
\begin{lemma} \label{BD4}
If $\mathrm{SOL}(f^\infty, g^\infty) = \{0\},$ there exist constants $c > 0$ and $R > 0$ such that
\begin{eqnarray*}
\|\frak{m}(x)\| & \ge & c\, \| x \|  \quad \textrm{ for all } \quad \|x\| \ge R.
\end{eqnarray*}
\end{lemma}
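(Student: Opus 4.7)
The plan is to argue by contradiction via a normalization and compactness argument. Suppose the conclusion fails; then we can extract a sequence $\{x^k\} \subset \mathbb{R}^n$ with $\|x^k\| \to \infty$ and $\|\frak{m}(x^k)\|/\|x^k\| \to 0$. Writing $u^k := x^k/\|x^k\|$ and passing to a subsequence, we may assume $u^k \to u$ with $\|u\| = 1$. The goal is to show that $u \in \mathrm{SOL}(f^\infty, g^\infty)$, which would contradict the hypothesis that $\mathrm{SOL}(f^\infty, g^\infty) = \{0\}$.

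To this end, fix a component index $i$ and use the decompositions $f_i = f_i^\infty + r_i$ and $g_i = g_i^\infty + s_i$, where $\deg r_i < d_f$ and $\deg s_i < d_g$. By homogeneity,
\begin{eqnarray*}
f_i(x^k) &=& \|x^k\|^{d_f} f_i^\infty(u^k) + O(\|x^k\|^{d_f - 1}), \\
g_i(x^k) &=& \|x^k\|^{d_g} g_i^\infty(u^k) + O(\|x^k\|^{d_g - 1}).
\end{eqnarray*}
I would then run a case analysis on the signs of $f_i^\infty(u)$ and $g_i^\infty(u)$. If $f_i^\infty(u) < 0$, then the leading term dominates with negative sign and $f_i(x^k) \le -c\,\|x^k\|^{d_f}$ eventually, so $|\frak{m}_i(x^k)| \ge c\,\|x^k\|$ for large $k$, contradicting the assumption; the case $g_i^\infty(u) < 0$ is symmetric. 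If instead both $f_i^\infty(u) > 0$ and $g_i^\infty(u) > 0$, then both $f_i(x^k)$ and $g_i(x^k)$ are bounded below by $c\,\|x^k\|^{\min\{d_f, d_g\}}$ for $k$ large, hence $\frak{m}_i(x^k) \ge c\,\|x^k\|$, again a contradiction. Consequently, for every $i$ we must have $f_i^\infty(u) \ge 0$, $g_i^\infty(u) \ge 0$, and $\min\{f_i^\infty(u), g_i^\infty(u)\} = 0$.

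Putting this together gives $f^\infty(u) \ge 0$, $g^\infty(u) \ge 0$, and $\langle f^\infty(u), g^\infty(u) \rangle = \sum_i f_i^\infty(u) g_i^\infty(u) = 0$, so $u \in \mathrm{SOL}(f^\infty, g^\infty) = \{0\}$, contradicting $\|u\| = 1$. The main technical point is ensuring that the lower-order remainders $r_i$ and $s_i$ do not disturb the sign analysis of the leading terms; this is handled by the continuity $f_i^\infty(u^k) \to f_i^\infty(u)$ together with the fact that $\|x^k\|^{d_f - 1}/\|x^k\|^{d_f} \to 0$ (and analogously for $g_i$), so that the asymptotic behavior of each component is genuinely governed by the sign of $f_i^\infty(u)$ or $g_i^\infty(u)$.
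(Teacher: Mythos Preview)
Your proof is correct and follows essentially the same approach as the paper: both argue by contradiction, extract a sequence $x^k$ with $\|x^k\|\to\infty$ and $\|\frak{m}(x^k)\|/\|x^k\|\to 0$, normalize, pass to a unit-sphere limit, and show that limit lies in $\mathrm{SOL}(f^\infty,g^\infty)$. The only organizational difference is that the paper first passes to a subsequence on which a fixed index set $I$ records which of $f_i(x^k),g_i(x^k)$ realizes the minimum, and then argues componentwise from that; you instead run a direct sign analysis on $f_i^\infty(u)$ and $g_i^\infty(u)$, which is slightly more streamlined but amounts to the same thing.
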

\begin{proof}
By contradiction, assume that there exists a sequence $\{x^k\} \subset \mathbb{R}^n$ such that $\|x^k\| \to \infty$ as $k \to \infty$ and
\begin{eqnarray*}
\|\frak{m}(x^k)\| & < & \frac{1}{k} \|x^k\|  \quad \textrm{ for all } \quad k \in \mathbb{N}.
\end{eqnarray*}
Since the number of all subsets of $\{1, \ldots, n\}$ is finite, we can assume that there exists a (possibly empty) set $I \subset \{1, \ldots, n\}$ such that for all $k \in \mathbb{N},$
\begin{eqnarray*}
f_i(x^k) & \le & g_i(x^k) \quad \textrm{ for } i \in I,\\
f_i(x^k) & > &  g_i(x^k) \quad \textrm{ for } i \not \in I.
\end{eqnarray*}
Consequently, we have for all $k \in \mathbb{N},$
\begin{eqnarray*}
|f_i(x^k)| & < & \frac{1}{k} \|x^k\| \quad \textrm{ for } i \in I,\\
|g_i(x^k)| & < & \frac{1}{k} \|x^k\| \quad \textrm{ for } i \not \in I.
\end{eqnarray*}
Let $k \to \infty$ and assume (without loss of generality) $\lim \frac{x^k}{\|x^k\|} = x.$ Write 
\begin{eqnarray*}
f^\infty(x) := (f_1^\infty(x), \ldots, f_n^\infty(x)) \quad \textrm{ and } \quad g^\infty(x) := (g_1^\infty(x), \ldots, g_n^\infty(x)),
\end{eqnarray*}
where $f_i^\infty$ and $g_i^\infty$ are either zero or homogeneous polynomials of degrees $d_f$ and $d_g,$ respectively. Assume that we have proved:
\begin{eqnarray}\label{PT41}
\min\{f_i^\infty(x), g_i^\infty(x)\} & = & 0 \quad \textrm{ for } \quad i = 1, \ldots, n.
\end{eqnarray}
This, of course, implies from the assumption that $x = 0.$ As $\|x\| = 1,$ we reach a contradiction.

So we are left with proving \eqref{PT41}. To see this, we first observe that
\begin{eqnarray*}
f_i^\infty(x) &=& \lim_{k \to \infty} \frac{f_i(x^k)}{\|x^k\|^{d_f}} \ = \ 0 \quad \textrm{ for } \quad i \in I,\\
g_i^\infty(x) &=& \lim_{k \to \infty} \frac{g_i(x^k)}{\|x^k\|^{d_g}} \ = \ 0 \quad \textrm{ for } \quad i \not \in I.
\end{eqnarray*}
(Recall that $f$ and $g$ are polynomial maps of degrees $d_f$ and $d_g,$ respectively.)
Take arbitrarily  $i \in I.$ If the polynomial $g_i$ is nonnegative on some subsequence of the sequence $\{x^k\}$ then $g_i^\infty(x) \ge 0$ and so 
$\min\{f_i^\infty(x), g_i^\infty(x)\} = 0.$
Otherwise, if, for all $k$ sufficiently large, $g_i(x^k) \le 0$ then $f_i(x^k) \le g_i(x^k) \le 0$ (because of $i \in I$), and hence
\begin{eqnarray*}
- \frac{1}{k} \|x^k\|  &<& f_i(x^k) \  \le \ g_i(x^k) \ \le \ 0,
\end{eqnarray*}
which, in turn, implies easily that $g_i^\infty(x) = 0,$ and hence $\min\{f_i^\infty(x), g_i^\infty(x)\} = 0.$

Similarly, we also have $\min\{f_i^\infty(x), g_i^\infty(x)\} = 0$ for all $i \not \in I,$ and so \eqref{PT41} is proved.
\end{proof}

We are now in a position to prove Theorem~\ref{Theorem13}.
\begin{proof}[Proof of Theorem~\ref{Theorem13}]
By Proposition~\ref{MD2}, the solution set $\mathrm{SOL}(f, g)$ is compact, and so it is contained in some ball with radius $R \ge 1$ centered at the origin. Consequently, \begin{eqnarray*}
\mathrm{dist}(x, \mathrm{SOL}(f, g)) & \le & 2\, \|x\| \quad \textrm{ for all } \quad \|x\| \ge R.
\end{eqnarray*}
By Lemma~\ref{BD4} and by increasing $R$ (if necessary), we can find a constant $c_1 > 0$ satisfying
\begin{eqnarray*}
c_1\, \|x\| & \le & \|\frak{m}(x)\|   \quad \textrm{ for all } \quad \|x\| \ge R.
\end{eqnarray*}
Therefore, 
\begin{eqnarray*}
\frac{c_1}{2} \, \mathrm{dist}(x, \mathrm{SOL}(f, g)) & \le & \|\frak{m}(x)\| \quad \textrm{ for all } \quad \|x\| \ge R.
\end{eqnarray*}

On the other hand, by Theorems~\ref{Theorem11} and \ref{Theorem12}, we have
\begin{eqnarray*}
c_2\, \mathrm{dist}(x, \mathrm{SOL}(f, g))^{\alpha} & \le & \|\frak{m}(x)\| \quad \textrm{ for all } \quad \|x\| \le R
\end{eqnarray*}
for some $c_2 > 0.$ Letting $c := \min\{\frac{c_1}{2}, c_2\},$ we get the desired result.
\end{proof}

\begin{remark}{\rm
Consider the function $\frak{r} \colon \mathbb{R}^n \rightarrow \mathbb{R}$ by
\begin{eqnarray*}
\frak{r}(x) &:=& \sum_{i = 1}^n \left([-f_i(x)]_+ + [-g_i(x)]_+ + \sqrt{|f_i(x) g_i(x)|} \right).
\end{eqnarray*}
Clearly, $\mathrm{SOL}(f, g) = \{x \in \mathbb{R}^n \ : \ \frak{r}(x) = 0\}.$ On the other hand, it is not hard to check that
\begin{eqnarray*}
|\min\{a, b\}| & \le & [-a]_+ + [-b]_+ + \sqrt{|ab|}
\end{eqnarray*}
for all real numbers $a, b.$ Hence $\|\frak{m}(x)\| \le \frak{r}(x)$ for all $x \in \mathbb{R}^n.$ Consequently, Theorems~\ref{Theorem11}, \ref{Theorem12} and \ref{Theorem13} still hold if we replace $\|\mathfrak{m}(x)\|$ by $\mathfrak{r}(x).$ 
}\end{remark}

We finish this section with the following result, which states that, generically, PCPs have a global Lipschitzian error bound.

\begin{proposition}
Let $d_i, d_i', i = 1, \ldots, n,$ be positive integer numbers. For some open dense semi-algebraic set of $(f, g)$ in $\mathbf{P}_{(d_1, \ldots, d_n)} \times \mathbf{P}_{(d_1', \ldots, d_n')},$ the solution set for the corresponding complementarity problem $\mathrm{PCP}(f, g)$ has a global Lipschitzian error bound:
There exists a constant $c > 0$ such that
\begin{eqnarray*}
c\, \mathrm{dist}(x, \mathrm{SOL}(f, g)) & \le & \|\frak{m}(x)\| \quad \textrm{ for all } \quad x \in \mathbb{R}^n.
\end{eqnarray*}
\end{proposition}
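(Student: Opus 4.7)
The plan is to show that the open dense semialgebraic set $\mathscr{U}$ already produced in Proposition~\ref{Proposition31} (possibly intersected with one more such set extracted from the same Sard-theoretic construction) delivers the desired conclusion. Fix $(f,g) \in \mathscr{U}$. By Proposition~\ref{Proposition31}(i) the set $\mathrm{SOL}(f,g) = \{x^1,\ldots,x^N\}$ is finite, and by (iii) we have $\mathrm{SOL}(f^\infty, g^\infty) = \{0\}$. I would then piece together three regimes: a neighborhood of each solution, a compact intermediate region, and infinity.

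Near a single $x^j$: Proposition~\ref{Proposition31}(ii) gives $f_i(x^j) + g_i(x^j) > 0$ for every $i$, so there is a unique index set $I_j \subset \{1,\ldots,n\}$ with $f_i(x^j) = 0 < g_i(x^j)$ for $i \in I_j$ and $g_i(x^j) = 0 < f_i(x^j)$ for $i \notin I_j$. On a sufficiently small neighborhood $U_j$ of $x^j$ the natural map $\mathfrak{m}$ therefore agrees with the polynomial map
$$\phi_j(x) := \big((f_i(x))_{i \in I_j},\,(g_i(x))_{i \notin I_j}\big).$$
The Sard-with-parameter argument used in the lemma supporting Proposition~\ref{Proposition31} shows that generically $0$ is a regular value of the relevant evaluation map in each of the $2^n$ index configurations, so (after intersecting $\mathscr{U}$ with the corresponding open dense semialgebraic sets) the Jacobian of $\phi_j$ at $x^j$ has rank $n$. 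The inverse function theorem then yields a constant $c_j > 0$ with $c_j\,\|x-x^j\| \le \|\mathfrak{m}(x)\|$ on $U_j$, and by shrinking $U_j$ we may further assume $\mathrm{dist}(x,\mathrm{SOL}(f,g)) = \|x-x^j\|$ there.

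For the tail behavior I would invoke Lemma~\ref{BD4}: since $\mathrm{SOL}(f^\infty,g^\infty) = \{0\}$, it provides constants $c_\infty, R > 0$ with $\|\mathfrak{m}(x)\| \ge c_\infty \|x\|$ whenever $\|x\| \ge R$. Enlarging $R$ so that $R \ge 2\max_j \|x^j\|$ and $\bigcup_j U_j \subset \{\|x\| < R\}$, we get $\mathrm{dist}(x,\mathrm{SOL}(f,g)) \le 2\|x\|$ for $\|x\| \ge R$, hence $\|\mathfrak{m}(x)\| \ge (c_\infty/2)\,\mathrm{dist}(x,\mathrm{SOL}(f,g))$ on this tail. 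On the compact remainder $K := \{\|x\| \le R\} \setminus \bigcup_j U_j$, the continuous map $\mathfrak{m}$ is nowhere zero, so $\inf_K \|\mathfrak{m}\| > 0$ while $\mathrm{dist}(\cdot,\mathrm{SOL}(f,g))$ is bounded on $K$, giving a trivial Lipschitzian inequality. Setting $c$ equal to the minimum of the three families of constants concludes.

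The principal obstacle is making the first step fully rigorous: it requires revisiting the Sard-with-parameter argument of Section~\ref{Section3} to certify that, generically and simultaneously over the finitely many subsets $I \subset \{1,\ldots,n\}$, the relevant Jacobians at solutions are invertible, and verifying that the resulting exceptional set is a finite intersection of open dense semialgebraic sets (hence again open dense semialgebraic). Everything else is a matter of patching constants.
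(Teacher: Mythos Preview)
Your proposal is correct and follows essentially the same three-regime argument as the paper's proof: a local Lipschitz bound near each (isolated) solution via the inverse function theorem, a trivial bound on the compact remainder where $\mathfrak{m}$ does not vanish, and the linear growth at infinity from Lemma~\ref{BD4} via $\mathrm{SOL}(f^\infty,g^\infty)=\{0\}$. The ``principal obstacle'' you flag is in fact already resolved inside the proof of the lemma supporting Proposition~\ref{Proposition31}: for each index set $I$, the Sard-with-parameter step there yields not only finiteness of the zero set of $(f_i,g_j)_{i\in I,\,j\notin I}$ but also surjectivity of its Jacobian at every zero, and the generic set $\mathscr{U}$ is built precisely as the finite intersection of the resulting $\mathscr{U}_I$'s---so no further shrinking is required.
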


\begin{proof}
Let $\mathscr{U}$ be the open and dense subset of $\mathbf{P}_{(d_1, \ldots, d_n)} \times \mathbf{P}_{(d_1', \ldots, d_n')}$ 
for which the conclusion of Proposition~\ref{Proposition31} holds. Take any $(f, g) \in \mathscr{U}.$ Then $\mathrm{SOL}(f^\infty, g^\infty) = \{0\}.$
As in the proof of Theorem~\ref{Theorem13}, we can find constants $c_1 > 0$ and $R > 0$ such that
\begin{eqnarray*}
c_1\, \mathrm{dist}(x, \mathrm{SOL}(f, g)) & \le & \|\frak{m}(x)\| \quad \textrm{ for all } \quad \|x\| \ge R.
\end{eqnarray*}
So it remains to prove that
\begin{eqnarray*}
c_2\, \mathrm{dist}(x, \mathrm{SOL}(f, g)) & \le & \|\frak{m}(x)\| \quad \textrm{ for all } \quad \|x\| \le R.
\end{eqnarray*}
for some constant $c_2 > 0.$ To do this, it suffices to show that for each $x^* \in \mathrm{SOL}(f, g),$ there exist constants $c > 0$ and $\epsilon > 0$ such that
\begin{eqnarray*}
c\, \mathrm{dist}(x, \mathrm{SOL}(f, g)) & \le & \|\frak{m}(x)\| \quad \textrm{ for all } \quad \|x - x^*\| \le \epsilon.
\end{eqnarray*}

Indeed, let $x^* \in \mathrm{SOL}(f, g).$ Then there exists a subset $I$ of $\{1, \ldots, n\}$ such that the following conditions hold:
\begin{itemize}
\item[(a)] The Jacobian of the map 
$$\Phi \colon \mathbb{R}^n \rightarrow \mathbb{R}^n, \quad x \mapsto (f_i(x), g_j(x))_{i \in I, j \not \in I},$$  
at $x^*$ is non-degenerate; and
 
\item[(b)] $f_i(x^*) = 0$ and $g_i(x^*) > 0$ for all $i \in I,$ and $f_i(x^*) > 0$ and $g_i(x^*) = 0$ for all $i \not \in I.$
\end{itemize}
By the condition (a), $\Phi$ is a diffeomorphism on some neighbourhood of $x^*.$ Let $\Psi$ be its local inverse. Then the map $\Psi$ is locally Lipschitz in a neighbourhood of the origin $0 = \Phi(x^*).$ In particular, there exists a constant $c > 0$ such that for all $y$ near $0,$ 
\begin{eqnarray*}
\|y - 0\| &\ge& c\, \| \Psi(y) - \Psi(0) \| \ = \ c\, \| \Psi(y) - x^* \|.
\end{eqnarray*}
Consequently, there exists a constant $\epsilon > 0$ such that
\begin{eqnarray*}
\|\Phi(x)\| &\ge& c\, \| x - x^* \|  \quad \textrm{ for all } \quad \|x - x^*\| \le \epsilon.
\end{eqnarray*}

On the other hand, by continuity, it follows from the condition (b) that
\begin{eqnarray*}
\Phi(x) = \frak{m}(x)  \quad \textrm{ for all } \quad \|x - x^*\| \le \epsilon.
\end{eqnarray*}
(Perhaps, after reducing $\epsilon.$) Therefore, we have for all $x \in \mathbb{R}^n$ with $\|x - x^*\| \le \epsilon,$
\begin{eqnarray*}
\| \frak{m}(x)\| &\ge& c\, \| x - x^* \| \ \ge \ c\, \mathrm{dist}(x, \mathrm{SOL}(f, g)),
\end{eqnarray*}
which completes the proof.
\end{proof}

\subsubsection*{Added note}
After this paper had been completed, the authors learned that some results on PCPs (nonemptiness and compactness of the solution set, basic topological properties, and global Lipschitzian error bounds for the solution set) were obtained recently in \cite{Ling2019}.\footnote{We would like to thank Hongjin He for showing us this reference.} However, the approaches and techniques in the paper cited differ from ours, and furthermore, the following properties were not considered: genericity, uniqueness as well as error bounds with exponents explicitly determined.

\end{document}